\newtheorem*{corollary*}{Corollary}
\newtheorem*{theorem*}{Theorem}
\newtheorem{theorem}{Theorem} 
\newtheorem{proposition}[theorem]{Proposition}
\newtheorem{lemma}[theorem]{Lemma}
\theoremstyle{definition}
\newtheorem*{remark}{Remark}
\theoremstyle{remark}
\numberwithin{equation}{section}
\newcommand{\dsum}{\mathop{\sum\sum}}
\newcommand{\pkn}{\mathcal{P}_k(n)}
\newcommand{\pkd}{\mathcal{P}_{k,d}(n)}
\renewcommand{\epsilon}{\varepsilon}
\newcommand{\sk}{S^{(k)}(n)}
\newcommand{\sdk}{S_d^{(k)}(n)}
\newcommand{\sek}{S_e^{(k)}(n)}
\newcommand{\E}{\mathbb{E}}
\newcommand{\p}[2]{\mathcal{P}_{#1}(#2)}
\begin{document}

\title{Sums of random multiplicative functions over function fields with few irreducible factors}

\author{Daksh Aggarwal}
\address{
Department of Mathematics \\ 
Grinnell College\\ 
1115 8th Ave \# 3011\\
Grinnell, IA\\
USA \\
50112} 
\email{aggarwal2@grinnell.edu}

\author{Unique Subedi}
\address{
Department of Statistics \\ 
University of Michigan\\ 
1085 University Ave\\
323 West Hall\\
Ann Arbor, MI\\
USA \\
48109
} 
\email{subedi@umich.edu}

\author{William Verreault}
\address{
D\'{e}partement de Math\'{e}matiques et de Statistique\\ 
Universit\'{e} Laval\\ 
Qu\'ebec\\
QC\\
G1V 0A6 \\
Canada} 
\email{william.verreault.2@ulaval.ca}

\author{Asif Zaman}
\address{Department of Mathematics\\
University of Toronto \\
40 St. George Street, Room 6290 \\
Toronto, ON \\
Canada \\
M5S 2E4}
\email{zaman@math.toronto.edu}

\author{Chenghui Zheng}
\address{Department of Statistics\\
University of Toronto \\
100 St.George Street\\
Toronto, ON \\
Canada \\
M5S 3G3}
\email{chenghui.zheng@mail.utoronto.ca}

\maketitle

\vspace*{-4mm}

\begin{abstract}
We establish a normal approximation for the limiting distribution of partial sums of random  Rademacher multiplicative functions over function fields, provided the number of irreducible factors of the polynomials is small enough. This parallels work of Harper for random Rademacher multiplicative functions  over the integers.
\end{abstract}

\section{Introduction} 
Let $\mathcal{M}$ be the set of monic polynomials belonging to the polynomial ring $\mathbb{F}_q[t]$ with coefficients in the finite field $\mathbb{F}_q$ with $q$ elements, where $q \geq 2$ is a prime power.  A \textit{random Rademacher multiplicative function} $f : \mathcal{M} \to \{-1,0,1\}$ over $\mathbb{F}_q[t]$ is obtained by picking independent random variables $f(P)$ uniformly distributed on $\{\pm 1\}$ (that is, taking the value $\pm 1$ with probability $1/2$ each) for monic irreducible polynomials $P$,  extending $f$ multiplicatively to all squarefree monic polynomials, and setting $f$ to be zero for all non-squarefree monic polynomials. For example, if $F=P_1\cdots P_k$ for distinct irreducible monic polynomials $P_1,\dots, P_k$, then $f(F) = f(P_1) \cdots f(P_k)$. For any positive integers $k$ and $n$, set
\[
\p{k}{n} = \{ F \in \mathcal{M} : F \text{ squarefree, } \omega(F) = k, \text{ and } \deg(F) = n\},
\]
where $\omega(F)$ is the number of distinct irreducible factors of the polynomial $F$, and $\deg(F)$ is the degree of $F$. The purpose of this article is to establish the following theorem.

\begin{theorem} \label{thm:Main} 
Let $f$ be a random Rademacher multiplicative function over $\mathbb{F}_q[t]$, where $q \geq 2$ is a fixed prime power.
If $k \geq 1$ satisfies $k=o(\log n)$ as $n\to\infty$, then 
\begin{equation}
\frac{1}{\sqrt{\left|\pkn\right|}}\sum_{F \in \mathcal{P}_k(n)} f(F)	
\label{1.1}
\end{equation}
converges in distribution to the standard normal distribution $N(0,1)$ as $n\to\infty$.
\end{theorem}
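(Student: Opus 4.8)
The plan is to use the method of moments. Since $N(0,1)$ is determined by its moments, it suffices to prove that for every fixed integer $j\ge 1$ the $j$-th moment of the random variable in \eqref{1.1} converges to the $j$-th moment of $N(0,1)$, namely to $0$ when $j$ is odd and to $(j-1)!!$ when $j=2m$ is even. Expanding the $j$-th power and using that the $f(P)$ are independent and $f$ is multiplicative on squarefree polynomials, one gets $\E[f(F_1)\cdots f(F_j)]=\mathbf 1[F_1\cdots F_j\text{ is a perfect square}]$ for squarefree $F_1,\dots,F_j$, hence
\[
\E\Bigl[\Bigl(\sum_{F\in\pkn}f(F)\Bigr)^{j}\Bigr]=N_j(n):=\#\bigl\{(F_1,\dots,F_j)\in\pkn^{j}:F_1\cdots F_j\text{ a perfect square}\bigr\}.
\]
I would then sort the $j$-tuples by the partition recording which $F_i$ coincide: if the distinct values are $G_1,\dots,G_r$ with multiplicities $e_1,\dots,e_r$, then $\prod_\ell G_\ell^{e_\ell}$ is a square iff $\prod_{\ell:\,e_\ell\text{ odd}}G_\ell$ is, and since a product of two distinct squarefree polynomials is never a square and a squarefree polynomial of positive degree is never a square, the set $T=\{\ell:e_\ell\text{ odd}\}$ has size $0$ or at least $3$. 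The tuples with $T=\varnothing$ (which forces $j$ even) contribute $(2m-1)!!\,\abs{\pkn}^{m}+O_m(\abs{\pkn}^{m-1})$, the main term coming from the $(2m-1)!!$ perfect matchings; all other tuples contribute $\ll_m\sum_{3\le t\le j}D_t(n)\,\abs{\pkn}^{\lfloor (j-t)/2\rfloor}$, where
\[
D_t(n):=\#\bigl\{(G_1,\dots,G_t)\in\pkn^{t}\text{ pairwise distinct}:G_1\cdots G_t\text{ a perfect square}\bigr\}.
\]
Since $\abs{\pkn}\gg q^{n}/n\to\infty$ when $k=o(\log n)$, the whole theorem follows from the single estimate $D_t(n)=o(\abs{\pkn}^{t/2})$ for each fixed $t\ge 3$, in the range $k=o(\log n)$.

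To prove that estimate, fix $t\ge 3$ and take a tuple counted by $D_t(n)$. Each irreducible $P$ dividing some $G_i$ divides an even number $a_P\ge 2$ of them, and $\sum_P a_P=tk$; the tuples with some $a_P\ge 4$ involve fewer than $tk/2$ distinct primes and, by a similar but easier analysis, are of strictly smaller order, so I restrict to tuples with $a_P=2$ for all $P$. Each common prime then determines an edge of a graph $\Gamma$ on $\{1,\dots,t\}$ joining the two indices it divides; writing $m_{ij}$ and $s_{ij}$ for the number and the total degree of the primes common to $G_i$ and $G_j$, one has $\sum_j m_{ij}=k$ and $\sum_j s_{ij}=n$ for each $i$, and pairwise distinctness of the $G_i$ forces every vertex of $\Gamma$ to have degree at least $2$ (a vertex of degree $\le 1$ would make two of the $G_i$ equal), so $\Gamma$ has at least $t$ edges. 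Bounding the number of tuples with a prescribed $\Gamma$, weights $(m_{ij})$ and degrees $(s_{ij})$ by $\prod_{\{i,j\}}\abs{\p{m_{ij}}{s_{ij}}}$ and inserting a uniform estimate $\abs{\p{\ell}{s}}\ll q^{s}(\log(s+2))^{\ell-1}/(s\,(\ell-1)!)$, the power of $q$ is always $q^{tn/2}$ because $\sum_{\{i,j\}}s_{ij}=\tfrac12\sum_i\sum_j s_{ij}=tn/2$. Comparing with $\abs{\pkn}^{t/2}\asymp q^{tn/2}(\log n)^{t(k-1)/2}\big/\bigl(n^{t/2}((k-1)!)^{t/2}\bigr)$, one is reduced to bounding the leftover logarithmic, polynomial and factorial factors, summed over $\Gamma$, $(m_{ij})$ and $(s_{ij})$.

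The key feature is that $\Gamma$ has at least $t$ edges, so the number of free logarithms, $\sum_{\{i,j\}}(m_{ij}-1)=tk/2-\#E(\Gamma)\le tk/2-t$, falls at least $t/2$ short of the $tk/2-t/2$ logarithms present in $\abs{\pkn}^{t/2}$. Carrying out the summation, the slowest-decaying contributions come from the configurations with exactly $t$ edges — i.e.\ $\Gamma$ a disjoint union of cycles — with an extreme weighting (one prime shared in a pair, the other $k-1$ shared with the remaining neighbour), which contribute $\asymp q^{tn/2}(\log n)^{(t/2)(k-2)}\big/\bigl(n^{t/2}((k-2)!)^{t/2}\bigr)$, giving
\[
\frac{D_t(n)}{\abs{\pkn}^{t/2}}\ \ll_t\ \Bigl(\frac{k}{\log n}\Bigr)^{t/2}+(\text{faster-decaying terms})\ \longrightarrow\ 0,
\]
the first term going to $0$ precisely because $k=o(\log n)$. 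Balanced weightings (and all configurations when $t$ is odd, e.g.\ $\Gamma$ a union of triangles) instead produce a multinomial factor $\binom{k}{\lfloor k/2\rfloor}\asymp 2^{k}$ from splitting the $k$ prime factors of each $G_i$ into balanced groups, but there the sharing forces the relevant degrees to be $\asymp n/t$, and the extra negative powers of $n$ from this rigidity outweigh $2^{k}$ as soon as $k=o(\log n)$.

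The routine parts are the moment expansion and the enumeration of the graphs $\Gamma$. The real obstacle, and where $k=o(\log n)$ is genuinely used, is the final step: obtaining the count $\abs{\p{\ell}{s}}$ uniformly in $\ell\le k$ and executing the multi-index summation so as to extract the decay $(k/\log n)^{\Theta_t(1)}$ while keeping the entropy factors of order $2^{\Theta(k)}$ under control. The analysis also indicates that the hypothesis is essentially sharp: if $k\sim c\log n$ one expects the fourth moment to tend to $3+\Theta(c^{2})\ne 3$.
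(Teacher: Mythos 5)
Your route is genuinely different from the paper's. The paper never touches moments of the full sum: it filters by the largest irreducible degree, applies McLeish's martingale CLT (\cref{Thm:Martingale}) to the pieces $S^{(k)}_d(n)$, and therefore only needs fourth-moment-type quantities, namely $\sum_d \E[S^{(k)}_d(n)^4]$ and $\sum_{d\neq e}\E[S^{(k)}_d(n)^2S^{(k)}_e(n)^2]$, which \cref{lem:Count} converts into the $I$- and $J$-sums and which a single induction, \cref{lem:KeyLemma} on iterated convolutions of $|\p{k_i}{n_i}|^2$, disposes of. Your moment expansion, the observation that the odd-multiplicity set $T$ has size $0$ or $\geq 3$, and the reduction of the whole theorem to $D_t(n)=o(|\pkn|^{t/2})$ for each fixed $t\geq 3$ are correct, and so is the graph set-up: minimum degree $2$, hence at least $t$ edges, and the count for fixed $\Gamma$, $(m_{ij})$, $(s_{ij})$ bounded by $\prod_{\{i,j\}}|\p{m_{ij}}{s_{ij}}|$. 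Spot checks agree with your claimed $(k/\log n)^{t/2}$ decay; in fact the $4$-cycle constraints force opposite edges to carry equal parameters, so that configuration is exactly the $r=2$ sum of \cref{lem:KeyLemma}, while a triangle gives $|\p{k/2}{n/2}|^3$, smaller than $|\pkn|^{3/2}$ by a factor of size $2^{O(k)}n^{-3/2+o(1)}$. What each approach buys: the martingale route pays the conceptual price of the filtration but then needs only one convolution lemma; your route is conceptually elementary but must control a graph-indexed family of such sums for every fixed $t$, i.e.\ it replaces \cref{lem:KeyLemma} by a strictly more general estimate.

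Two steps of your plan are genuinely incomplete, and one stated mechanism is inaccurate. First, the reduction to $a_P=2$ (``similar but easier'') is only asserted; primes dividing four or more of the $G_i$ turn edges into hyperedges and need their own, if routine, accounting. Second, and more seriously, balancing the $m_{ij}$ at a vertex does \emph{not} force the degrees to be $\asymp n/t$: on the $4$-cycle one may have $m_{12}=m_{23}=k/2$ with $s_{12}=O(1)$ and $s_{23}=n-O(1)$. Such terms are not suppressed by extra negative powers of $n$ but by losing roughly $k/2$ logarithms against a factorial (entropy) gain of size $4^{k}$, contributing $(Ck/\log n)^{\Theta(k)}$ — still $o(1)$ for $k=o(\log n)$, but for a different reason than the one you give, and this entropy-versus-lost-logarithms bookkeeping, carried out uniformly over all minimum-degree-$2$ graphs on $t$ vertices and all admissible $(m_{ij}),(s_{ij})$ with row sums $k$ and $n$, is precisely the hard part that remains to be executed (it is the analogue of the induction proving \cref{lem:KeyLemma}). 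Finally, a caution: over the integers the moment method (Hough) yielded a smaller range than Harper's martingale argument, so the assertion that all moments stay Gaussian in the full range $k=o(\log n)$ is exactly what your argument must establish; the degree rigidity of $\mathbb{F}_q[t]$ makes it plausible, as your heuristics and the checks above indicate, but it cannot be taken for granted.
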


This result is motivated by the study of random multiplicative functions over the integers, which were introduced by Wintner \cite{wintner_random_1944} to heuristically model the   M\"{o}bius function.  
A random Rademacher multiplicative function $f : \mathbb{N} \to \{-1,0,1\}$ over the integers is similarly obtained by picking independent random variables $f(p)$ for each prime $p$, extending it multiplicatively to all squarefree integers, and setting it to be zero for all non-squarefree integers. If $\pi_k(x)$ is the number of squarefree integers $\leq x$ with  $k$ distinct prime factors, then the  sum
\begin{equation}
\label{1.2}
\frac{1}{\sqrt{\pi_k(x)}}\sum_{\substack{m \leq x \\ \omega(m)=k} } f(m) 
\end{equation}
parallels the quantity in \eqref{1.1}. Indeed, integers of size $x$ are  known to correspond to polynomials in $\mathbb{F}_q[t]$ of degree $n \approx \log x$.  Improving upon a result of Hough \cite{hough_summation_2011}, Harper \cite{harper_limit_2013} established the following theorem which motivates our \cref{thm:Main}.

\begin{theorem}[Harper] \label{thm:Harper}
Let $f$ be a random Rademacher multiplicative function over the integers. If $k \geq 1$ satisfies $k=o(\log\log  x)$ as $x \to \infty$, then \eqref{1.2}	 converges in distribution to the standard normal $N(0,1)$ as $x \to \infty$. 
\end{theorem}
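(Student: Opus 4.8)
The plan is to use the method of moments. Set $T_x:=\sum_{m\le x,\, \omega(m)=k}f(m)$, so that \eqref{1.2} is $T_x/\sqrt{\pi_k(x)}$ (I suppress throughout the dependence $k=k(x)$ on $x$). Since the standard normal law is determined by its moments, the Fr\'echet--Shohat theorem reduces the claim to showing that all moments of $T_x/\sqrt{\pi_k(x)}$ converge to those of $N(0,1)$, i.e.\ that for every fixed integer $h\ge 1$,
\[
\E\bigl[T_x^{\,2h}\bigr]=(2h-1)!!\,\pi_k(x)^{h}\bigl(1+o(1)\bigr)
\qquad\text{and}\qquad
\E\bigl[T_x^{\,2h+1}\bigr]=o\bigl(\pi_k(x)^{h+1/2}\bigr)
\]
as $x\to\infty$.

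The first step is an exact variance computation together with a combinatorial reformulation. Since the $f(p)$ are independent Rademacher variables, for squarefree $m_1,\dots,m_\ell$ one has $\E[f(m_1)\cdots f(m_\ell)]=\prod_p\E[f(p)^{e_p}]$, where $e_p=\#\{i:p\mid m_i\}$; this is $1$ if every $e_p$ is even and $0$ otherwise. In particular $\E[T_x^2]=\pi_k(x)$, confirming $\pi_k(x)^{1/2}$ as the correct normalization. More generally, encoding each squarefree $m_i$ with $\omega(m_i)=k$ by the set $A_i$ of its prime factors, regarded as a vector in the $\mathbb{F}_2$-vector space spanned by the primes,
\[
\E\bigl[T_x^{\,\ell}\bigr]=\#\Bigl\{(A_1,\dots,A_\ell)\ :\ |A_i|=k,\ \textstyle\prod_{p\in A_i}p\le x\ \ (1\le i\le\ell),\ \ \sum_{i=1}^{\ell}A_i=0\Bigr\},
\]
the sum being in $\mathbb{F}_2$. (When $k$ and $\ell$ are both odd this set is empty, as $\sum_i|A_i|=\ell k$ is then odd while $\sum_p e_p$ is even; so odd moments require work only when $k$ is even.)

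The main term is the contribution of \emph{matched} tuples when $\ell=2h$: fix a perfect matching of $\{1,\dots,2h\}$, of which there are $(2h-1)!!$, and set $A_i$ equal within each pair. Summing over matchings, the tuples with $h$ distinct underlying values are each counted once and contribute $(2h-1)!!\,\pi_k(x)(\pi_k(x)-1)\cdots(\pi_k(x)-h+1)$, while those with a repeated value number $O_h(\pi_k(x)^{h-1})$; hence the matched tuples contribute $(2h-1)!!\,\pi_k(x)^{h}(1+o(1))$, using only that $\pi_k(x)\to\infty$ (which holds throughout the range $k=o(\log\log x)$). It therefore remains to bound the \emph{unmatched} tuples: every tuple when $\ell$ is odd, and every tuple not of the above matched-pair form when $\ell=2h$.

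This unmatched estimate is the heart of the matter, and the only place the hypothesis $k=o(\log\log x)$ is used. I would partition $\{1,\dots,\ell\}$ into the connected components of the graph joining $i\sim j$ whenever $A_i\cap A_j\ne\emptyset$. Distinct components involve disjoint sets of primes, so $\sum_iA_i=0$ holds within each component separately; a tuple is a union of matched pairs exactly when every component is, so an unmatched tuple has some component $C$ on $r\ge 3$ indices that is not itself a union of matched pairs. Bounding the remaining $\ell-r$ coordinates crudely by $O(\pi_k(x)^{(\ell-r)/2})$ via the same formula, it suffices to prove that, for each fixed $r\ge 3$, the number of such configurations on $r$ indices is $o(\pi_k(x)^{r/2})$. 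I would establish this by a sieve-type count: using $\sum_iA_i=0$ to eliminate one set, and a spanning tree of $C$ to force at each step a prime of the newly revealed set to repeat a prime already seen, one reduces to counting tuples of squarefree $k$-almost-primes up to $x$ with prescribed coincidences among their prime factors and with products of various sub-parts constrained to be $\le x$. These counts are handled by Mertens-type estimates and the Landau--Sathe--Selberg asymptotic $\pi_j(x)\sim\frac{x(\log\log x)^{j-1}}{(j-1)!\,\log x}$, valid uniformly for $j\le k$. Carrying it out, the count of bad $r$-configurations emerges as $\pi_k(x)^{r/2}$ times a factor of shape $c^{k}/(\log x)^{\kappa}$ for absolute constants $c,\kappa>0$ — the $c^{k}$ being the aggregate weight of the binomial coefficients that record how a $k$-element set of prime factors splits among the forced coincidences — and this tends to $0$ precisely when $2^{k}=o(\log x)$, i.e.\ when $k=o(\log\log x)$. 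The main obstacle is thus making these sieve bounds uniform enough in $k$ to render the error genuinely $o(\pi_k(x)^{\ell/2})$; granting that, adding the main term and the error yields the two displayed moment asymptotics, and the theorem follows.
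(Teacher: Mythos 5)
This is a statement the paper quotes from Harper \cite{harper_limit_2013} without proof, so the relevant comparison is with Harper's own argument and with the paper's proof of the parallel \cref{thm:Main}. Both of those proceed via McLeish's martingale central limit theorem (\cref{Thm:Martingale}), filtering on the largest prime (resp.\ largest-degree irreducible) and verifying conditions that involve only \emph{fourth}-moment computations. Your proposal instead runs the method of moments, which is essentially the strategy of Hough \cite{hough_summation_2011} --- the earlier result that Harper's theorem improves upon --- and that strategy is exactly what fails to reach the full range $k=o(\log\log x)$. So while your setup (the variance computation, the reformulation of $\E[T_x^{\ell}]$ as counting tuples with square product, the extraction of the matched main term $(2h-1)!!\,\pi_k(x)^h$) is correct, the decisive step is the unmatched-tuple bound, and there your accounting has a concrete error.

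The savings produced by forcing a prime of one $A_i$ to coincide with a prime already seen is a factor of $\log\log x$, not $\log x$: a free prime contributes roughly $\sum_{p\le x}1/p\asymp\log\log x$ to the relevant Euler-product/Mertens bound, while a forced repetition contributes $O(1)$. (This is visible in the analogous function-field computation: in \cref{lem:KeyLemma} the gain of the error over the main term is a power of $\log n$, the analogue of $\log\log x$, per extra ``block.'') Meanwhile the combinatorial weight you correctly identify --- the binomial coefficients recording how the $k$ prime factors split among the forced coincidences --- is genuinely of size $c^{k}$ with $c>1$ if one bounds divisor counts crudely. The correct shape of your error is therefore $\pi_k(x)^{r/2}\,c^{k}k^{O(1)}/(\log\log x)^{\kappa}$, and this tends to $0$ only when $k\ll\log\log\log x$, which recovers Hough's range, not Harper's. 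To reach $k=o(\log\log x)$ one must eliminate the $c^k$ factor entirely (as the paper does for the fourth moment via the $\gcd$ reparametrization in \eqref{eqn:GCD}, leaving a loss of only $(k/\log n)^{2r-2}$), and it is not known how to do this uniformly for all the connectivity patterns arising in the $2h$-th moment for every fixed $h$; indeed it is not even clear that the higher moments converge to the Gaussian moments throughout $k=o(\log\log x)$. This is precisely why Harper replaced the method of moments with the martingale CLT, which needs nothing beyond fourth moments. As written, your argument proves a weaker theorem than the one stated.
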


Notice the range $k=o(\log\log x)$ in \cref{thm:Harper} over the integers corresponds precisely to the range $k = o(\log n)$ in \cref{thm:Main} over the polynomial ring $\mathbb{F}_q[t]$.  \cref{thm:Main} can therefore be viewed as an extension of \cref{thm:Harper} to the function field setting. For an introduction to multiplicative functions over function fields, we refer the reader to work of Granville, Harper, and Soundararajan \cite{granville_mean_2015}, whose conventions we follow here.

 The proof strategy for \cref{thm:Main} adapts Harper's key ideas with the verification of three conditions in a martingale central limit theorem (\cref{Thm:Martingale}). In \cref{sec:PlanProof}, we prepare this strategy and define our martingale difference sequence. The analysis of this martingale allows us to efficiently reduce the theorem to a natural counting problem (\cref{lem:Count}), just as Harper did in Section 4.2 of \cite{harper_limit_2013}. However, this counting problem for function fields introduces cases which did not appear for the integers. The source of these new cases is  simple: two distinct irreducible polynomials can have the same degree, but two distinct rational primes cannot have the same size. Since our martingale is filtered based on the degree of the largest irreducible factor (similar to the size of the prime for integers), this distinction creates new terms in our sums that we must carefully treat; see the remark following \cref{lem:Count} for details. 
 
 In \cref{sec:CompleteProof}, we proceed to analyze these sums and complete the proof of \cref{thm:Main} with some technical estimates.  Although these combinatorial sums are somewhat more intricate, the estimation of these sums is  simpler due to the familiar analytic benefits of function fields over integers. The key technical lemma for this analysis (\cref{lem:KeyLemma}) is proved in \cref{sec:TechnicalProof}. We use recent results on the size of $\p{k}{n}$ by G\'{o}mez-Colunga et al. \cite{gomez2020size} and  Afshar and Porritt \cite{afshar_function_2019}, which respectively parallel classical estimates for $\pi_k(x)$ by Hardy and Ramanujan, and Sathe and Selberg. 

We conclude the introduction with a few remarks on the sharpness of \cref{thm:Main} and possible extensions. Harper  showed that the range $k=o(\log\log x)$ in \cref{thm:Harper} is optimal \cite[Corollary 1]{harper_limit_2013}. He further established a normal approximation for sums like \eqref{1.2} with the looser restriction $\omega(m) \leq k$ and also for a larger class of random multiplicative functions \cite[Theorem 3]{harper_limit_2013}. It would be of interest to determine whether the range $k=o(\log n)$ in \cref{thm:Main} is optimal and whether similar extensions hold in our setting. It seems plausible that such results carry over by similar arguments, but we did not pursue those investigations. If  the range $k=o(\log n)$ is optimal as Harper's work would suggest, then this indicates that the proof of \cref{thm:Main} is quite delicate and sensitive to even minor losses. 

\medskip

\subsection*{Acknowledgments}
\noindent
This research was conducted as part of the 2020 Fields Undergraduate Summer
Research Program. The authors are grateful to the Fields Institute for their financial support and facilitating our online collaboration. 

\section*{Notation} 
Let $q \geq 2$ be a prime power. Let $\mathbb{F}_q[t]$ be the polynomial ring with coefficients in the finite field $\mathbb{F}_q$ with $q$ elements. Let   $\mathcal{M}$ be the set of monic polynomials belonging to $\mathbb{F}_q[t]$.  We shall use capital letters to denote a polynomial $F$ in $\mathcal{M}$,  writing $\deg(F)$ for the degree of the polynomial $F$, $\omega(F)$ for the number of distinct irreducible factors of $F$, and $\mathrm{P}^+(F)$ for the maximum degree of an irreducible dividing $F$. The letters $P$ and $Q$ will be reserved for monic irreducible polynomials. For integers $k, n \geq 1$, let $\mathcal{P}_k(n)$ be the set of squarefree polynomials $F$ in $\mathcal{M}$ with $\omega(F) = k$ and $\deg(F) = n$. The letter $f$ denotes a random Rademacher multiplicative function over $\mathbb{F}_q[t]$. The relation $u \ll v$ means that there exists an absolute positive constant $C$ such that $|u| \leq C v$. If the constant $C$ depends on a parameter, say $\epsilon$, then we shall write $u \ll_{\epsilon} v$.

% SEC: Proof of Main Theorem
\section{Plan for the proof of \cref{thm:Main}} 
\label{sec:PlanProof}
For integers $k, n \geq 1$ and a random Rademacher multiplicative function $f$ over $\mathbb{F}_q[t]$, define
\[
S^{(k)}(n) = \sum_{F \in \p{k}{n}} f(F). 
\]
Notice $\E[f(F)] = 0$ for any non-trivial squarefree $F$ because $f$ is multiplicative and $(f(P))_P$ is a sequence of independent random variables with mean zero. Hence, $S^{(k)}(n)$ has mean zero. Also, since  
$\mathbb{E}[f(F)f(G)]=1$ if $F=G$ and $0$ otherwise, it follows that 
\begin{equation}
	\label{eqn:2.1}
	\E\big[ S^{(k)}(n)^2 \big] =\dsum_{F,G\in\p{k}{n}}\mathbb{E}[f(F)f(G)] = |\p{k}{n}|.
\end{equation}
Thus the mean of \eqref{1.1} is zero and its variance is indeed one. 
Our goal is to prove that  $S^{(k)}(n)$,
normalized by its standard deviation $\sqrt{|\p{k}{n}|}$, converges in distribution to the standard normal as $n \to \infty$, provided $k = o(\log n)$. The strategy follows that of Harper \cite{harper_limit_2013} with appropriate modifications and simplifications as mentioned earlier in the introduction.

First, notice $\p{1}{n}$ is the set of  irreducible monic polynomials of degree $n$, so $S^{(1)}(n)$ is a sum of $|\p{1}{n}|$ independent random variables uniform on $\{\pm 1\}$. Thus, the classical central limit theorem implies that $S^{(1)}(n)$ converges in distribution to $N(0,1)$ as $n \to \infty$. We may therefore assume throughout that $k \geq 2$.

\subsection{Central limit theorem for martingale difference sequences}
To prove convergence in distribution to standard normal, we want to use a central limit theorem that gives information on the convergence of the partial sums of a martingale difference sequence. The result we use was obtained by McLeish \cite{mcleish_dependent_1974}, but we state it as it appeared in \cite{harper_limit_2013}.
\begin{theorem}[McLeish] \label{Thm:Martingale}
For $n \in \mathbb{N}$, suppose that $k_n \in \mathbb{N}$, and that $X_{i,n}$, $1 \leq i \leq k_n$, is a martingale difference sequence on $(\Omega, \mathscr{F}, (\mathscr{F}_{i,n})_i, \mathbb{P})$. Write $S_n := \sum_{i \leq k_n}X_{i,n}$ and suppose that the following conditions hold:
\begin{enumerate}[label=\upshape(\roman*)]
\item\label{cond 1}  $\displaystyle\sum_{i \leq k_n}\mathbb{E}[X_{i,n}^2] \to 1$ as $n\to \infty;$
\item\label{cond 2}  for each $\varepsilon > 0$, we have $\displaystyle\sum_{i \leq k_n}\mathbb{E}\left[X_{i,n}^2\mathbf{1}_{|X_{i,n}| > \varepsilon}\right] \to 0$ as $n\to \infty$;
\item\label{cond 3}  $\displaystyle\limsup_{\substack{n\to \infty}} \sum_{i \leq k_n}\sum_{j \leq k_n, j\neq i} \mathbb{E}\left[X_{i,n}^2 X_{j,n}^2\right] \leq 1.$
\end{enumerate}
Then, $S_n$ converges in distribution to $N(0,1)$ as $n\to \infty$.
\end{theorem}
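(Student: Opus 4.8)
The plan is to establish convergence of characteristic functions: by L\'evy's continuity theorem it suffices to show $\mathbb{E}\bigl[e^{itS_n}\bigr] \to e^{-t^2/2}$ for each fixed real $t$. Following McLeish, the key algebraic device is the auxiliary product $T_n := \prod_{i=1}^{k_n}(1 + itX_{i,n})$ together with the elementary identity
\[
e^{iy} = (1+iy)\exp\!\Bigl(-\tfrac12 y^2 + r(y)\Bigr), \qquad r(y) := \tfrac12\bigl(y^2 - \log(1+y^2)\bigr) + i(y - \arctan y),
\]
valid for all real $y$, where $0 \le \operatorname{Re} r(y) \le \tfrac12 y^2$ and $|r(y)| \le |y|^3$ whenever $|y|\le 1$. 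Multiplying over $i$ yields the exact factorisation $e^{itS_n} = T_n \exp\bigl(-\tfrac{t^2}{2}V_n + R_n\bigr)$ with $V_n := \sum_i X_{i,n}^2$ and $R_n := \sum_i r(tX_{i,n})$, and in particular $\bigl|\exp(-\tfrac{t^2}{2}V_n + R_n)\bigr| \le 1$. The martingale-difference hypothesis is used only to get $\mathbb{E}[T_n] = 1$: since $\mathbb{E}[X_{i,n}\mid\mathscr{F}_{i-1,n}] = 0$, the partial products $\prod_{i\le m}(1+itX_{i,n})$ form a complex martingale in $m$, so conditioning down from $m = k_n$ collapses $\mathbb{E}[T_n]$ to $1$.

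I would then read off from hypotheses \ref{cond 1}--\ref{cond 3} that $\max_i |X_{i,n}| \to 0$ and $V_n \to 1$, both in probability. The first follows from \ref{cond 2} via $\max_i X_{i,n}^2 \le \varepsilon^2 + \sum_i X_{i,n}^2\mathbf{1}_{|X_{i,n}|>\varepsilon}$, which gives $\mathbb{E}[\max_i X_{i,n}^2] \to 0$. For the second I would truncate: fix $\delta>0$, set $V_n^{(\delta)} := \sum_i X_{i,n}^2\mathbf{1}_{|X_{i,n}|\le\delta}$, note $\mathbb{E}[V_n - V_n^{(\delta)}]\to 0$ by \ref{cond 2}, and bound, using $X_{i,n}^4\mathbf{1}_{|X_{i,n}|\le\delta}\le\delta^2 X_{i,n}^2$,
\[
\mathbb{E}\bigl[(V_n^{(\delta)})^2\bigr] \le \delta^2\,\mathbb{E}[V_n] + \sum_{i\neq j}\mathbb{E}[X_{i,n}^2 X_{j,n}^2],
\]
whose $\limsup$ is at most $\delta^2 + 1$ by \ref{cond 1} and \ref{cond 3}. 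Together with $\mathbb{E}[V_n^{(\delta)}]\to 1$ this forces $\limsup_n\mathbb{E}\bigl|V_n^{(\delta)}-1\bigr| \le \delta$, and letting $\delta\to 0$ gives $V_n\to 1$ in $L^1$, hence in probability.

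To conclude I would truncate at a stopping time so that the product becomes uniformly integrable. Since the partial sums $\sum_{i\le m}X_{i,n}^2$ are non-decreasing, $\sigma_n := \min\{m : \sum_{i\le m}X_{i,n}^2 > 2\}$ is a stopping time; put $N_n := \sigma_n\wedge k_n$, $\tilde S_n := \sum_{i\le N_n}X_{i,n}$, and let $\tilde T_n,\tilde V_n,\tilde R_n$ denote the correspondingly truncated analogues. Then $\{\tilde S_n\ne S_n\}\subseteq\{V_n>2\}$ has probability $\to 0$, so it is enough to prove $\tilde S_n\to N(0,1)$ in distribution. On $\{V_n\le 2\}$ one has $N_n=k_n$, $\tilde S_n=S_n$ and $\tilde V_n=V_n$, so $\tilde V_n\to 1$ in probability, while in general $\tilde V_n\le 2+\max_i X_{i,n}^2$; since $\sum_{i<N_n}X_{i,n}^2\le 2$ and $1+u\le e^u$, we get $|\tilde T_n|^2 = \prod_{i\le N_n}(1+t^2 X_{i,n}^2)\le e^{2t^2}\bigl(1+t^2\max_i X_{i,n}^2\bigr)$, whence $\sup_n\mathbb{E}[|\tilde T_n|^2]<\infty$ and $\{\tilde T_n\}$ is uniformly integrable; moreover $|\tilde R_n|\le |t|^3(\max_i|X_{i,n}|)\tilde V_n\to 0$ in probability, and $\mathbb{E}[\tilde T_n]=1$ by optional stopping. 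Inserting the factorisation,
\[
\mathbb{E}\bigl[e^{it\tilde S_n}\bigr] = e^{-t^2/2}\,\mathbb{E}[\tilde T_n] + \mathbb{E}\bigl[\tilde T_n\,\tilde g_n\bigr] = e^{-t^2/2} + \mathbb{E}\bigl[\tilde T_n\,\tilde g_n\bigr], \qquad \tilde g_n := \exp\!\bigl(-\tfrac{t^2}{2}\tilde V_n + \tilde R_n\bigr) - e^{-t^2/2},
\]
where $|\tilde g_n|\le 2$ (using $\operatorname{Re}\tilde R_n\le\tfrac{t^2}{2}\tilde V_n$) and $\tilde g_n\to 0$ in probability. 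Uniform integrability of $\{\tilde T_n\}$ and boundedness of $\{\tilde g_n\}$ then give $\mathbb{E}[\tilde T_n\tilde g_n]\to 0$; hence $\mathbb{E}[e^{it\tilde S_n}]\to e^{-t^2/2}$, and since $\mathbb{P}(\tilde S_n\ne S_n)\to 0$ the same holds for $\mathbb{E}[e^{itS_n}]$, finishing the proof.

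I expect the main obstacle to be the uniform-integrability issue behind the last step: a priori $T_n$ is neither bounded nor uniformly integrable, so one cannot take limits inside $\mathbb{E}[T_n(\cdots)]$, and the stopping time $\sigma_n$ is introduced precisely to replace $T_n$ by a version with a bounded second moment at the cost of an event of vanishing probability. The reduction $V_n\to 1$ in probability is the other delicate point and is where hypothesis \ref{cond 3} does its work: it controls only the off-diagonal of $\mathbb{E}[V_n^2]$, while truncation at level $\delta$ is what renders the diagonal $\sum_i\mathbb{E}[X_{i,n}^4\mathbf{1}_{|X_{i,n}|\le\delta}]\le\delta^2\mathbb{E}[V_n]$ negligible; the balance of \ref{cond 1}--\ref{cond 3} is calibrated so that both reductions go through.
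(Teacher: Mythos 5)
The paper does not prove this theorem at all: it is quoted verbatim (attributed to McLeish via Harper) and then applied, so there is no internal proof to compare against. What you have written is a self-contained proof of McLeish's CLT, and it follows McLeish's original strategy (the same one Harper points to): factor $e^{itS_n}$ through the complex product $T_n=\prod_i(1+itX_{i,n})$, use the martingale-difference hypothesis to make $T_n$ mean one, reduce conditions \ref{cond 1}--\ref{cond 3} to $\max_i|X_{i,n}|\to 0$ and $V_n\to 1$ in probability via the $\delta$-truncation trick, and finally introduce a stopping time to make the stopped product $L^2$-bounded so the dominated-convergence step is legitimate.

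I checked the argument line by line and it holds up. The identity $e^{iy}=(1+iy)\exp(-\tfrac12 y^2+r(y))$ and the bounds $0\le\operatorname{Re}r(y)\le\tfrac12 y^2$, $|r(y)|\le|y|^3$ for $|y|\le 1$ are correct; the deduction $\mathbb{E}[\max_i X_{i,n}^2]\to 0$ from \ref{cond 2}, the $L^1$-convergence $V_n\to 1$ from \ref{cond 1}--\ref{cond 3} via $\limsup\mathbb{E}[(V_n^{(\delta)}-1)^2]\le\delta^2$, the measurability of $\sigma_n$, and the $L^2$ bound $|\tilde T_n|^2\le e^{2t^2}(1+t^2\max_i X_{i,n}^2)$ are all right; and uniform integrability of $\tilde T_n$ plus $\tilde g_n\to 0$ in probability with $|\tilde g_n|\le 2$ closes the argument. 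Two small points worth making explicit if you formalize this: (a) the bound $|\tilde R_n|\le|t|^3(\max_i|X_{i,n}|)\tilde V_n$ uses $|r(y)|\le|y|^3$, which is only valid for $|y|\le 1$, so the inequality should be restricted to the event $\{\max_i|tX_{i,n}|\le 1\}$; since that event has probability tending to $1$, the conclusion $\tilde R_n\to 0$ in probability is unaffected. (b) For $\mathbb{E}[\tilde T_n]=1$ one should verify directly that the \emph{stopped} partial products form an integrable martingale rather than invoking optional stopping on the unstopped (possibly non-integrable) product; your $L^2$ bound on the stopped product already does the work, so this is a presentational fix, not a gap.
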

% To see how a martingale difference shows up in our problem, write $\mathrm{P}^+ F$ for the maximum degree of the irreducible factors of $F$, and let $
% \p{k,d}{n}:=\{F \in \mathcal{P}_k(n) : \mathrm{P}^+(F) =d\}.
% $
Let us describe the martingale difference sequence in our problem. Let $n \geq k \geq 2$. Write $\mathrm{P}^+(F)$ for the maximum degree of the irreducible factors of $F$. For $d \geq 1$, define
$$
\p{k,d}{n}:=\{F \in \mathcal{P}_k(n) : \mathrm{P}^+(F) = d  \},
$$
and set 
$$
\sdk:= \sum_{F \in \mathcal{P}_{k,d}(n)} f(F).
$$
Notice the set $\p{k,n}{n}$ is empty as $k \geq 2$, so $\p{k}{n}$ is the union of $\p{k,d}{n}$ over $1 \leq d \leq n-1$ and therefore $S^{(k)}(n) = \sum_{d=1}^{n-1} S^{(k)}_d (n)$.  

Writing $F\in \pkd$ as $F=QF'$, where $Q$ is a degree $d$ factor of $F$ (among possibly many), it follows by multiplicativity and independence that
$
\mathbb{E}[f(F)] = \mathbb{E}[f(Q)] \mathbb{E}[f(F')].
$
Since $\E[ f(Q) \mid \{f(P): \deg P< d\} ] = \E[f(Q)] =0$, we get that
\[
\mathbb{E}\left[f(F)\mid \{f(P): \deg P< d\}\right]=0,
\]
and so by the linearity of expectation, it follows that
$$\mathbb{E}\left[\sdk| \{f(P): \deg P < d\}\right] = 0.$$
Hence, if $\mathscr{F}_d$ denotes the sigma algebra generated by $\{f(P): \deg P < d\}$, then $(\sdk)_{d\leq n-1}$ is a martingale difference sequence with respect to $(\mathscr{F}_d)_{d\leq n-1}$.

We will therefore apply \cref{Thm:Martingale} to the random variables $\sdk/\sqrt{\left|\pkn\right|}$, which still form a martingale difference sequence and whose sum over $d\leq n-1$ equals $\sk/\sqrt{\left|\pkn\right|}$, the quantity considered in \cref{thm:Main}. For convenience, we also use the notation 
$$
\p{k,\leq d}{n} := \bigcup_{j\leq d}\p{k,j}{n}.
$$

\subsection{Reduction to some counting problems}
 By a computation similar to \eqref{eqn:2.1}, it follows that $\mathbb{E}[\sdk^2]=\left|\pkd\right|$, so that condition \ref{cond 1} of \cref{Thm:Martingale} holds for all $n$, not just in the limit. 
 Proving \cref{thm:Main} then boils down to verifying conditions \ref{cond 2} and \ref{cond 3} of \cref{Thm:Martingale}. The second condition stated in terms of our normalised random variables asks that for all $\varepsilon>0$,
$$
\sum_{d=1}^{n-1}\mathbb{E}\left[\left(\sdk/\sqrt{\left|\pkn\right|}\right)^2\mathbbm{1}_{\left|\sdk\right|/\sqrt{|\pkn|}>\varepsilon}\right] \to 0
$$
as $n\to\infty.$ This quantity is at most
$$
\varepsilon^{-2}\sum_{d=1}^{n-1}\mathbb{E}\left[\sdk^4/\left|\pkn\right|^2\right].
$$
% as $n\to\infty$.
Thus, it suffices to prove that 
\begin{equation}\label{condition_2}  
\sum_{d=1}^{n-1}\mathbb{E}[\sdk^4]=o(\left|\pkn\right|^2) 
\end{equation}
as $n\to\infty$.
% $$
% \mathop{\sum_{d=1}^n\sum_{e=1}^n}_{d\neq e}1.
% $$
The third condition becomes $$
\limsup_{n\to\infty}\sum_{d=1}^{n-1}\sum_{\substack{e=1 \\ e\neq d}}^{n-1}\mathbb{E}\left[\frac{\sdk^2\sek^2}{\left|\pkn\right|^2}\right]\leq 1.$$
Equivalently, we will show
\begin{equation}\label{condition_3}
   \sum_{d=1}^{n-1}\sum_{\substack{e=1\\ e\neq d}}^{n-1}\mathbb{E}\left[\sdk^2\sek^2\right]\leq(1+o(1))\left|\pkn\right|^2
\end{equation}
as $n\to\infty$. For any $1 \leq d,e \leq n-1$, it will therefore be convenient to express $\mathbb{E}\left[\sdk^2\sek^2\right]$ in terms of an explicit counting problem.
\begin{lemma} \label{lem:Count}
With the same notation as above,
\begin{equation} 
% \label{exp}
    \mathbb{E}\left[\sdk^2\sek^2\right] \leq |\p{k,d}{n}|  |\p{k,e}{n}|  + I_{k,d,e}(n)+ J_{k,d,e}(n), \nonumber
\end{equation}
where  
\begin{equation} \label{I-sum}
    I_{k,d,e}(n) = \sum_{t=1}^{k-1}\sum_{\ell=1}^{n-1}\sum_{M\in \p{2t, \leq \min\{d,e\}}{2\ell}} 
 \sum_{\substack{A\in \p{t,\leq d}{\ell} \\ A\mid M }}\sum_{ \substack{U\in\p{k-t,\leq d}{n-\ell} \\ \mathrm{P}^+(UA) = d} }
\sum_{\substack{B\in \p{t,\leq e}{\ell} \\ B\mid M}} \sum_{ \substack{V\in\p{k-t,\leq e}{n-\ell} \\ \mathrm{P}^+(VB) = e}  } 1,
\end{equation}
and $J_{k,d,e}(n) = 0$ if $d \neq e$, otherwise 
\begin{equation} \label{J-sum}
J_{k,d,d}(n) = \dsum_{ P,Q \in \p{1}{d} } \sum_{ M'\in \p{2k-2}{2n-2d} }   \dsum_{\substack{A', B'\in \p{k-1}{n-d} \\  A' \mid  \,M', \: B' \mid M'}} 1.
\end{equation}
\end{lemma}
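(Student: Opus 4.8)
The plan is to evaluate the fourth moment $\E[\sdk^2\sek^2]$ as an exact count of quadruples of polynomials and then to organise those quadruples by a greatest common divisor decomposition. Expanding the squares gives
\[
\E\big[\sdk^2\sek^2\big]=\sum_{F_1,F_2\in\p{k,d}{n}}\quad\sum_{G_1,G_2\in\p{k,e}{n}}\E\big[f(F_1)f(F_2)f(G_1)f(G_2)\big].
\]
Since $F_1,F_2,G_1,G_2$ are squarefree and $(f(P))_P$ is an independent family of mean-zero $\{\pm1\}$-valued variables, we have $f(F_1)f(F_2)f(G_1)f(G_2)=\prod_P f(P)^{v_P}$ with $v_P\in\{0,1,2,3,4\}$ the number of $F_1,F_2,G_1,G_2$ divisible by $P$, so the expectation is $1$ if $F_1F_2G_1G_2$ is a perfect square in $\mathbb{F}_q[t]$ and $0$ otherwise. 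Thus $\E[\sdk^2\sek^2]$ is the number of quadruples $(F_1,F_2,G_1,G_2)\in\p{k,d}{n}^2\times\p{k,e}{n}^2$ for which $F_1F_2G_1G_2$ is a perfect square.

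Next I would set $D:=\gcd(F_1,F_2)$, $E:=\gcd(G_1,G_2)$, $A:=F_1/D$, $B:=G_1/E$ and $M:=F_1F_2/D^2$. Because $F_1,F_2$ are squarefree, $M$ is squarefree, $\gcd(D,M)=1$, $A\mid M$ and $F_2/D=M/A$; symmetrically $B\mid M$ and $G_2/E=M/B$; and the square condition (equivalently, $F_1F_2$ and $G_1G_2$ have the same squarefree part) forces $M=G_1G_2/E^2$. Comparing degrees and $\omega$-values within $\{F_1,F_2\}$ and within $\{G_1,G_2\}$ yields $\deg A=\deg B=\ell$, $\deg D=\deg E=n-\ell$, $\omega(A)=\omega(B)=t$, $\omega(D)=\omega(E)=k-t$, where $\deg M=2\ell$, $\omega(M)=2t$, $0\le t\le k$; moreover $\mathrm{P}^+(M)\le\min\{d,e\}$, $\mathrm{P}^+(A),\mathrm{P}^+(D)\le d$, $\mathrm{P}^+(DA)=d$, and the analogues with $e,B,E$. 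The argument then splits according to the value of $t$.

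If $t=0$ then $M=1$, so $F_1=F_2\in\p{k,d}{n}$ and $G_1=G_2\in\p{k,e}{n}$ are arbitrary, contributing exactly $|\p{k,d}{n}|\,|\p{k,e}{n}|$. If $1\le t\le k-1$ (which forces $1\le\ell\le n-1$), the quadruple is recovered from $(t,\ell,M,A,D,B,E)$ via $F_1=DA$, $F_2=DM/A$, $G_1=EB$, $G_2=EM/B$, and the constraints above place this data in the sum defining $I_{k,d,e}(n)$ with $U=D$ and $V=E$; discarding the coprimality conditions $\gcd(D,M)=\gcd(E,M)=1$ and the conditions $\mathrm{P}^+(F_2)=d$, $\mathrm{P}^+(G_2)=e$ only inflates the count, so this case contributes at most $I_{k,d,e}(n)$. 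If $t=k$ then $\ell=n$ and $D=E=1$, so $F_1F_2=G_1G_2=M$ with $F_1,F_2$ coprime and $G_1,G_2$ coprime; since $\mathrm{P}^+(M)$ then equals both $d$ and $e$, this case is vacuous unless $d=e$, matching $J_{k,d,e}(n)=0$ for $d\ne e$. When $d=e$, I would choose degree-$d$ irreducibles $P\mid F_1$, $Q\mid F_2$ and set $M':=M/(PQ)$, $A':=F_1/P$, $B':=G_1/P$, so that $A',B'\mid M'$ lie in $\p{k-1}{n-d}$, $M'\in\p{2k-2}{2n-2d}$ and $(F_1,F_2,G_1,G_2)=(PA',QM'/A',PB',QM'/B')$; dropping the coprimality and $\mathrm{P}^+\le d$ side conditions then bounds the number of such quadruples by $J_{k,d,d}(n)$. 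Summing the three contributions gives the claimed inequality.

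I expect the case $t=k$ to be the main obstacle, and it is precisely the new feature relative to the integer setting flagged in the remark after the lemma. One must first observe that it is nonempty only when $d=e$, and then verify that the unconstrained sum $J_{k,d,d}(n)$ really does dominate the count of these quadruples. The delicacy is that a top-degree ($=d$) irreducible factor of $M=F_1F_2$ need not be unique and may sit in either $G_1$ or $G_2$, so one has to keep track of how the top-degree prime factors of $M$ are distributed between the $F$- and $G$-factorisations — using that $\{F_1,F_2\}$ and $\{G_1,G_2\}$ each partition this nonempty set of primes — and confirm that every admissible quadruple is produced (with harmless multiplicity) by the parametrisation $(P,Q,M',A',B')\mapsto(PA',QM'/A',PB',QM'/B')$.
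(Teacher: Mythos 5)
Your expansion of the fourth moment, the decomposition via $D=\gcd(F_1,F_2)$, $E=\gcd(G_1,G_2)$ and the common squarefree part $M$, and your treatment of the cases $t=0$ and $1\le t\le k-1$ coincide with the paper's argument and are correct. The gap is in the case $t=k$, which you rightly flag as the main obstacle but do not actually resolve: your parametrisation sets $B':=G_1/P$ where $P$ is a degree-$d$ irreducible factor of $F_1$, which presupposes that $F_1$ and $G_1$ share a degree-$d$ irreducible factor. That can fail, so the verification you defer to the end --- that every admissible quadruple is produced by $(P,Q,M',A',B')\mapsto(PA',QM'/A',PB',QM'/B')$ --- is false. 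Concretely, take $k=2$, $d=e$, distinct irreducibles $P_1,P_2\in\p{1}{d}$ and distinct irreducibles $R_1,R_2$ of degree $n-d<d$ (possible once $d$ is large), and consider $(F_1,F_2,G_1,G_2)=(P_1R_1,\,P_2R_2,\,P_2R_1,\,P_1R_2)$. The product is $(P_1P_2R_1R_2)^2$, all four polynomials lie in $\p{2,d}{n}$, and $\gcd(F_1,F_2)=\gcd(G_1,G_2)=1$, so this ``crossed'' quadruple belongs to your $t=k$ case; yet no degree-$d$ irreducible divides both $F_1$ and $G_1$, so it is not of your claimed form. These crossed configurations are exactly the new function-field phenomenon the remark after the lemma is about, so the step that would fail is precisely the step the lemma was designed to handle.

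The repair is what the paper does: choose distinct $P,Q\in\p{1}{d}$ dividing $M$ and allow the degree-$d$ irreducible stripped from $G_1$ to be either $P$ or $Q$, independently of the one stripped from $F_1$ (the four patterns $P\mid A,\,Q\mid B$; $Q\mid A,\,P\mid B$; $P\mid A,\,P\mid B$; $Q\mid A,\,Q\mid B$, with $A=F_1$, $B=G_1$). This decoupling is exactly why $J_{k,d,d}(n)$ is defined with the two independent conditions $A'\mid M'$ and $B'\mid M'$ rather than with $A'$ and $B'$ tied to the same irreducible; with that flexibility every $t=k$ quadruple (including the crossed ones) maps to a term of the $J$-sum and the bound goes through. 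With the $J$ step rewritten along these lines, the rest of your proposal matches the paper's proof.
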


\begin{proof}
Expanding out the sums and applying linearity of expectation, we get that
\begin{equation}
\mathbb{E}\left[\sdk^2\sek^2\right]=\dsum_{W,X\in\p{k,d}{n}}\dsum_{Y,Z\in\p{k,e}{n}}\mathbb{E}\left[f(W)f(X)f(Y)f(Z)\right]. 
\label{eqn:Expand-4tuple}	
\end{equation}
Notice the expectation on the righthand side is nonzero only when $WXYZ$ is a square, in which case it equals $1$. This is the counting problem, which we proceed to reformulate. We may write  $WX$ as the product of a square part $U^2$ and a square-free part $M$ so $W=UA$ and $X=U(M/A)$ for some $A$ that divides $M$.  Note that $U, A,$ and $M/A$ are all relatively prime and the maximum degree of their irreducible factors is $\leq d$. A similar reasoning for $YZ$ gives some other square part, say $V^2$, and forces their squarefree part to be $M$ as well so $Y = VB$ and $Z = V(M/B)$ for some $B$ that divides $M$. Again, $V, B,$ and $M/B$ are all relatively prime and the maximum degree of their irreducible factors is $\leq e$. 

With this notation in mind, we proceed to count the corresponding contributions according to cases. First, if $M = 1$ then $A=B=1$, so this case contributes at most
\[
\sum_{U \in \p{k,d}{n}}\sum_{V \in \p{k,e}{n}} 1 = |\p{k,d}{n}|  |\p{k,e}{n}|. 
\] 
Next, we count the terms in \eqref{eqn:Expand-4tuple} where $M \neq 1$ so $\deg M$ and $\omega(M)$ are both non-zero. As $M$ is non-trivial, we have that $A \in P_{t, \leq d}(\ell)$ for some $t \in \{1,\dots,k\}$ and $\ell \in\{1,\dots,n\}$. Comparing the degrees and number of irreducible factors of  $W = UA$ and $X = U(M/A)$, we deduce that $\mathrm{P}^+(UA) = d$ and
\[
\deg U + \deg A = n, \qquad \deg M = 2 \deg A, \qquad \omega(U) + \omega(A) = k, \qquad \omega(M) = 2\omega(A). 
\]
 As $A \in \p{t,\leq d}{\ell}$, this implies that $U \in \p{k-t, \leq d}{n-\ell}$ and $M \in \p{2t, \leq d}{2\ell}$.   A similar analysis holds when comparing $Y = VB$ and $Z = V(M/B)$ but, since the polynomial $M$ is common to both arguments, it follows that $A$ and $B$ necessarily have the same degree and same number of prime factors and so do $U$ and $V$. Hence, $B \in \p{t,\leq e}{\ell}, V \in \p{k-t,\leq e}{n-\ell}$, and $M \in \p{2t,\leq \min\{d,e\}}{2\ell}$. The terms in \eqref{eqn:Expand-4tuple} with $M \neq 1, t \in \{1,\dots,k-1\}$, and $\ell \in \{1,\dots,n-1\}$ therefore contribute at most $I_{k,d,e}(n)$. 

Continuing with this notation, the last case to consider is when $M \neq 1$ and $t=k$ (or equivalently $\ell =n$) in which case $U=V=1$. Notice $U=V=1$ implies that $WXYZ = U^2 V^2 M^2 = M^2$ has a prime factor of degree $\max\{d,e\}$ yet $\mathrm{P}^+(M) \leq \min\{ d,e\}$. If $d \neq e$, this leads to a contradiction, so this last case occurs if and only if $d=e$. Thus, $M$  has at least two distinct degree $d$ factors in this case and $\mathrm{P}^+(A) = \mathrm{P}^+(B) = d$ where $A$ and $B$ divide $M$.  Thus, there exists a pair of distinct irreducibles $P, Q \in \p{1}{d}$ such that $M=PQM'$, where $M'$ belongs to $\p{2k-2}{2n-2d}$ and at least one of the following holds:
\[
P\mid A \text{ and } Q\mid B, \qquad Q\mid A \text{ and } P\mid B, \qquad P\mid A \text{ and } P\mid B, \qquad Q\mid A \text{ and } Q\mid B. 
\] If, say, the first situation holds then $A = PA'$ and $B = QB'$ for $A',B' \in \p{k-1}{n-d}$ dividing $M'$. A similar statement holds for the other cases.   Combining all of these observations, we see that the terms in \eqref{eqn:Expand-4tuple} with $M \neq 1$ and $t=k$ contribute at most $J_{k,d,e}(n)$, as required. 
\end{proof}
\begin{remark}
    This lemma and its proof possess the key differences between the function field setting and the integers. Crucially, the product $WXYZ$ can form a square in a new way and contribute to \eqref{eqn:Expand-4tuple}.  Namely, if $d \leq e$ then $W$ and $X$ do not need to share the same irreducible factor of degree $d$; these factors of degree $d$ can instead pair with factors from $Y$ and $Z$. This manifests in \eqref{I-sum} by allowing $M$ to have these large irreducible factors of degree $d = \min\{d,e\}$ and also by creating the additional terms \eqref{J-sum} which do not appear in Section 4.2 of \cite{harper_limit_2013}. 
    
    If the irreducible factors of degree $d$ from $W$ and $X$ (resp. of degree $e$ from $Y$ and $Z$) are paired in a one-to-one manner, then only $U$ (resp. $V$) in \eqref{I-sum} would have these large factors of degree $d$ (resp. degree $e$) and moreover \eqref{J-sum} would not exist. This is precisely what happens for Harper in the integer setting. Namely, if integers $w$ and $x$ have largest prime factor $p$, then $p^2$ always divides $wx$ since the size of the prime corresponds uniquely to the prime itself. 
\end{remark}

Now, using \cref{lem:Count} with $d=e$, we see that \eqref{condition_2} becomes the requirement that
\[
\sum_{d=1}^{n-1} \big( |\p{k,d}{n}|^2   + I_{k,d,d}(n) + J_{k,d,d}(n) \big) = o\left(|\p{k}{n}|^2\right). 
\] 
Similarly, \eqref{condition_3} holds provided that
\[
\sum_{d=1}^{n-1} \sum_{e=1}^{n-1} \Big(|\p{k,d}{n}|  |\p{k,e}{n}|  + I_{k,d,e}(n)  \Big) \leq  (1 + o(1))|\p{k}{n}|^2.
\]
Since
\[
\sum_{d=1}^{n-1} \sum_{e=1}^{n-1} |\p{k,d}{n}|  |\p{k,e}{n}|  = |\p{k}{n}|^2,
\]
both  \eqref{condition_2} and  \eqref{condition_3} will therefore be satisfied provided  
\begin{equation}
\label{eqn:ThreeSums}
\sum_{d=1}^{n-1} |\p{k,d}{n}|^2 + \sum_{d=1}^{n-1} \sum_{e=1}^{n-1} I_{k,d,e}(n) + \sum_{d=1}^{n-1} J_{k,d,d}(n) = o( |\p{k}{n}|^2)
\end{equation}
as $n \to \infty$. This establishes \cref{thm:Main} assuming \eqref{eqn:ThreeSums} holds.

%
% NEW SECTION
%
\section{Completing the proof of \texorpdfstring{\cref{thm:Main}}{}}
\label{sec:CompleteProof} 
 
It remains to prove \eqref{eqn:ThreeSums}, which rests on the following key technical lemma whose proof is postponed to \cref{sec:TechnicalProof}.  

\begin{lemma} \label{lem:KeyLemma} 
Fix an integer $r \geq 1$. If $k$ and $n$ are integers such that $r \leq k \leq \frac{1}{3} \log n$, then 
\begin{equation}  \label{iteratedsum}
\mathop{\dsum \cdots \dsum}_{\substack{k_1, n_1, \dots,k_r, n_r \geq 1 \\ k_1+\cdots+ k_r = k \\ n_1+\cdots+ n_r = n} } |\p{k_1}{n_1}|^2 \cdots |\p{k_r}{n_r}|^2 \ll_r \frac{q^{2n} (\log n + 2-\log 2)^{2k-2r}}{n^2 (k-r)!^2}. 
\end{equation}
In particular, if $r \geq 2$ is fixed and $k = o(\log n)$ as $n \to \infty$, then the above is $o(|\p{k}{n}|^2)$. 
\end{lemma}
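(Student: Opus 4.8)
\emph{Overall plan.}
I would derive \eqref{iteratedsum} from the function-field Hardy--Ramanujan-type upper bound for $|\p{k}{n}|$ (due to G\'{o}mez-Colunga et al.\ \cite{gomez2020size}) and then obtain the ``in particular'' clause by pairing \eqref{iteratedsum} with the Sathe--Selberg-type asymptotic of Afshar--Porritt \cite{afshar_function_2019}. Concretely, starting from
\[
|\p{j}{m}| \ll \frac{q^m}{m}\cdot\frac{(\log m + b)^{j-1}}{(j-1)!}\qquad (j,m\ge 1)
\]
with an absolute constant $b$, squaring this and substituting it into the left-hand side of \eqref{iteratedsum} (the product of the $q^{2n_i}$ collapses to $q^{2n}$) reduces everything to the ``model sum''
\[
U_r(k,n):=\sum_{\substack{k_1+\cdots+k_r=k\\ n_1+\cdots+n_r=n}}\ \prod_{i=1}^r \frac{(\log n_i + b)^{2(k_i-1)}}{n_i^{2}\,(k_i-1)!^{2}},
\]
and it suffices to show $U_r(k,n)\ll_r (\log n + b)^{2(k-r)}\big/\big(n^2(k-r)!^2\big)$. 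The precise constant $2-\log 2$ in \eqref{iteratedsum} is what one gets by tracking $b$ through the argument (possibly after a dyadic splitting of the $n_i$-ranges, which introduces the $-\log 2$); I would not dwell on its exact value.

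\emph{Estimating the model sum.}
I would isolate the coordinate carrying the largest degree: by the symmetry of the summand,
\[
U_r(k,n)\le r\!\!\sum_{\substack{k_1+\cdots+k_r=k,\ n_1+\cdots+n_r=n\\ n_1\ge n_i\ (2\le i\le r)}}\ \prod_{i=1}^r \frac{(\log n_i + b)^{2(k_i-1)}}{n_i^{2}\,(k_i-1)!^{2}},
\]
and on this range $n_1\ge n/r$, so $n_1^{-2}\le r^2 n^{-2}$ and $\log n_1\le\log n$. Discarding the conditions tying $n_2,\dots,n_r$ to $n_1$ (which only enlarges the sum) decouples the remaining $n_i$-sums, each becoming $K_b(2(k_i-1)):=\sum_{\ell\ge 1}(\log\ell+b)^{2(k_i-1)}/\ell^2$. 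Comparing this series with $\int_1^\infty(\log u+b)^{2t}u^{-2}\,du=e^b\,\Gamma(2t+1,b)\le e^b(2t)!$ gives $K_b(2t)\ll_b (2t)!$, hence $K_b(2t)/(t!)^2\ll_b\binom{2t}{t}\le 4^t$; summing the product over $k_2+\cdots+k_r=k-k_1$ produces a tail factor $\ll_r 4^{\,k-k_1}\binom{k-k_1-1}{r-2}$. Altogether this leaves the one-variable estimate
\[
U_r(k,n)\ll_r \frac1{n^2}\sum_{k_1=1}^{k-r+1}\frac{(\log n+b)^{2(k_1-1)}}{(k_1-1)!^2}\,4^{\,k-k_1}\binom{k-k_1-1}{r-2}.
\]

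\emph{The main point.}
It remains to show this last sum is $\ll_r (\log n+b)^{2(k-r)}/(k-r)!^2$, i.e.\ that it is dominated, up to an $O_r(1)$ factor, by its top term $k_1=k-r+1$, which equals $4^{r-1}(\log n+b)^{2(k-r)}/(k-r)!^2$. This is the only place the hypothesis $k\le\frac13\log n$ is used. Writing the general term as $4^{k-1}$ times $\big((\log n+b)^2/4\big)^{k_1-1}/(k_1-1)!^2$ times the harmless factor $\binom{k-k_1-1}{r-2}$, the middle factor grows geometrically in $k_1$: the ratio of its consecutive values is $(\log n+b)^2/\big(4(k_1-1)^2\big)$, which is $\ge\tfrac94$ for all $k_1-1\le k-r\le\frac13\log n$, while the binomial factor only decreases. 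Hence, outside an $O_r(1)$-sized window below $k_1=k-r+1$, consecutive terms increase by a factor bounded below by an absolute constant $>1$, so the sum telescopes to $O_r(1)$ times its top term; inside the window there are $O_r(1)$ additional terms, each $O_r(1)$ times the top term since the binomial factors there are $O_r(1)$. This yields $U_r(k,n)\ll_r (\log n+b)^{2(k-r)}/\big(n^2(k-r)!^2\big)$, hence \eqref{iteratedsum}. I expect the bookkeeping around this geometric-domination step --- in particular verifying that $\binom{k-k_1-1}{r-2}$ never spoils the ratio estimate except in that bounded window --- to be the fiddliest part of the proof.

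\emph{The limiting consequence.}
Finally, for fixed $r\ge 2$ and $k=o(\log n)$ I would combine \eqref{iteratedsum} with the lower bound $|\p{k}{n}|\gg q^n(\log n)^{k-1}/\big(n(k-1)!\big)$, which holds in this range because the Sathe--Selberg factor of \cite{afshar_function_2019} tends to a positive constant as $(k-1)/\log n\to 0$. Using $(k-1)!/(k-r)!\le k^{\,r-1}$ and $(1+b/\log n)^{2(k-r)}=1+o(1)$ when $k=o(\log n)$, the ratio of the left-hand side of \eqref{iteratedsum} to $|\p{k}{n}|^2$ is $\ll_r \big(1+o(1)\big)\big(k/\log n\big)^{2(r-1)}$, which tends to $0$. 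This completes the proof.
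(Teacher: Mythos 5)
Your proposal is correct, but it takes a genuinely different route from the paper's. The paper proves \eqref{iteratedsum} by induction on $r$: it isolates the \emph{smallest} degree coordinate $n_r\le\lfloor n/r\rfloor$, applies the inductive hypothesis to the other $r-1$ coordinates (checking $k-k_r\le\tfrac13\log(n-n_r)$ via $n-n_r\ge n/2$), bounds the isolated factor by the G\'omez-Colunga et al.\ estimate, uses the same moment bound $\sum_j(\log j+c)^m/j^2\ll m!$, and finishes with a ratio test on the single sum over $k_r$, whose consecutive-term ratio is $\le 4k^2/(\log n)^2\le 4/9$, so that sum is dominated at $k_r=1$. You instead apply the Hardy--Ramanujan-type bound to \emph{all} factors at once, isolate the \emph{largest} coordinate $n_1\ge n/r$, decouple the remaining $n_i$-sums simultaneously (a factor $\binom{2(k_i-1)}{k_i-1}\le 4^{k_i-1}$ per coordinate plus the composition count $\binom{k-k_1-1}{r-2}$), and dominate the resulting $k_1$-sum at the opposite endpoint $k_1=k-r+1$; the ``in particular'' step via Afshar--Porritt is the same in both. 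The induction buys cleaner bookkeeping (no composition counting, no binomial factor to police near the endpoint), while your direct decoupling avoids induction and displays the structure of the multi-index sum explicitly. Your endpoint analysis can be streamlined: writing $k_1=k-r+1-s$, each term is at most $(4/9)^s\binom{r-2+s}{r-2}$ times the top term, and summing over all $s\ge0$ gives $O_r(1)$ with no window/case split. Minor slips that do not affect correctness: the constant $2-\log 2$ comes straight from the G\'omez-Colunga et al.\ bound (no dyadic splitting is involved); the consecutive ratio of your middle factor is $(\log n+b)^2/(4k_1^2)$, not $(\log n+b)^2/(4(k_1-1)^2)$, though the lower bound $9/4$ holds either way; and the binomial factor should be read with the convention that $r=1$ is the trivial single-term case.
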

Assuming \cref{lem:KeyLemma}, it suffices to show that each of the three sums in \eqref{eqn:ThreeSums} are $o( |\p{k}{n}|^2)$ provided $k =o(\log n)$ as $n \to \infty$. We deal with each estimate in separate subsections. 

 % SUBSEC: The easy quantity is small
\subsection{Estimate for $\sum_{d=1}^{n-1}\left|\p{k,d}{n}\right|^2$}  
  For $F\in\p{k,d}{n}$, one has $F=PF'$ for some $P \in \p{1}{d}$ and $F' \in \p{k-1}{n-d}$. This implies that $|\p{k,d}{n}| \leq |\p{1}{d}| |\p{k-1}{n-d}|$ and so
  \begin{equation} 
  \sum_{d=1}^{n-1}\left|\p{k,d}{n}\right|^2 \leq
  \sum_{d=1}^{n-1}|\p{1}{d}|^2\left|\mathcal{P}_{k-1}(n-d)\right|^2. 
  \nonumber
  \end{equation}
This is a subsum of \cref{lem:KeyLemma} with $r=2$ so it is  $o(|\pkn|^2)$ as $n \to \infty$, as required.  

% SUBSEC: The off-diagonal quantity is small
\subsection{Estimate for $\sum_{d=1}^{n-1}\sum_{e=1}^{n-1} I_{k,d,e}(n)$} Consider the definition of $I_{k,d,e}(n)$ in \eqref{I-sum}. The condition $\mathrm{P}^+(UA) = d$ implies that at least one of the following holds: $\mathrm{P}^+(U) = d$ or $\mathrm{P}^+(A) = d$. Summing over $d$ and, in some cases, dropping the requirement that the maximum degree of the irreducible factors of our polynomials is $\leq d$ or $\leq e$, this implies that $\sum_{d=1}^{n-1} I_{k,d,e}(n)$ is at most
\begin{align*}
&  \sum_{t=1}^{k-1}\sum_{\ell=1}^{n-1}\sum_{M\in \p{2t}{2\ell}} 
 \sum_{d=1}^{n-1} \Big(  \sum_{\substack{A\in \p{t,d}{\ell} \\ A\mid M }}\sum_{ \substack{U\in\p{k-t}{n-\ell}  } } +  \sum_{\substack{A\in \p{t}{\ell} \\ A\mid M }}\sum_{ \substack{U\in\p{k-t,d}{n-\ell}  } } \Big)
\sum_{\substack{B\in \p{t,\leq e}{\ell} \\ B\mid M}} \sum_{ \substack{V\in\p{k-t,\leq e}{n-\ell} \\ \mathrm{P}^+(VB) = e}  } 1 \\
& = 2 \sum_{t=1}^{k-1}\sum_{\ell=1}^{n-1}\sum_{M\in \p{2t}{2\ell}} 
    \sum_{\substack{A\in \p{t }{\ell} \\ A\mid M }}\sum_{ \substack{U\in\p{k-t}{n-\ell}  } }  
\sum_{\substack{B\in \p{t,\leq e}{\ell} \\ B\mid M}} \sum_{ \substack{V\in\p{k-t,\leq e}{n-\ell} \\ \mathrm{P}^+(VB) = e}  } 1. 
\end{align*}
Applying the same argument to the condition $\mathrm{P}^+(VB) = e$ and summing over $e$, it follows that $\sum_{d=1}^{n-1} \sum_{e=1}^{n-1} I_{k,d,e}(n) $ is at most
\begin{equation}
\begin{aligned}
 4 \sum_{t=1}^{k-1}\sum_{\ell=1}^{n-1} \Big( \sum_{M\in \p{2t}{2\ell}} 
    \sum_{\substack{A\in \p{t }{\ell} \\ A\mid M }}
\sum_{\substack{B\in \p{t}{\ell} \\ B\mid M}} 1 \Big)  \Big( \sum_{ \substack{U\in\p{k-t}{n-\ell}  } }   \sum_{ \substack{V\in\p{k-t}{n-\ell} }  } 1 \Big). 		
\end{aligned}
\label{eqn:I-simplified}
\end{equation}
Fix $t \in \{1,\dots,k-1\}$ and $\ell \in \{1,\dots,n-1\}$. Notice that the double sum with $U$ and $V$ is equal to $|\p{k-t}{n-\ell}|^2$. Next, consider the triple sum with $M, A,$ and $B$. Writing $G = \gcd(A,B)$, we have that $A = GA', B= GB'$, and $M = GA'B'M'$ for some $M'$ coprime to $A', B',$ and $G$. Since $A$ and $B$ have the same degree and same number of prime factors (and hence so do $A'$ and $B'$), it follows that $G$ and $M'$ must have the same degree and same number of  prime factors. Namely, if $G \in \p{j}{g}$ for some integer $0 \leq j \leq t$ and some integer $0 \leq g \leq \ell$, then $M' \in \p{j}{g}$ and $A', B' \in \p{t-j}{\ell-g}$. Note the case $j=0$ (and hence $g=0$) occurs when $G=M'=1$ so $M = AB$, and the case $j=t$ (and hence $g=\ell$) occurs when $A'=B'=1$ so $M = GM'$.  Combining these observations implies that
\begin{equation}
\begin{aligned}
\sum_{M\in \p{2t}{2\ell}} \sum_{\substack{A\in \p{t }{\ell} \\ A\mid M }} \sum_{\substack{B\in \p{t}{\ell} \\ B\mid M}} 1
& \leq  2|\p{t}{\ell}|^2 +  \sum_{j=1}^{t-1} \sum_{g=1}^{\ell-1} \dsum_{G, M' \in \p{j}{g}} \dsum_{A', B' \in \p{t-j}{\ell-g}} 1 \\
& = 2|\p{t}{\ell}|^2 +  \sum_{j=1}^{t-1} \sum_{g=1}^{\ell-1} |\p{j}{g}|^2 |\p{t-j}{\ell-g}|^2. 
\end{aligned}
\label{eqn:GCD}
\end{equation}
Inserting these estimates in \eqref{eqn:I-simplified}, we conclude that $\sum_{d=1}^{n-1} \sum_{e=1}^{n-1} I_{k,d,e}(n)$ is at most
\begin{align*}
 8 \sum_{t=1}^{k-1} \sum_{\ell=1}^{n-1} |\p{t}{\ell}|^2 |\p{k-t}{n-\ell}|^2  + 4\sum_{t=1}^{k-1} \sum_{\ell=1}^{n-1} \sum_{j=1}^{t-1} \sum_{g=1}^{\ell-1} |\p{k-t}{n-\ell}|^2 |\p{j}{g}|^2 |\p{t-j}{\ell-g}|^2.
\end{align*}
Since $k = o(\log n)$ as $n \to \infty$, both of these sums are $o( |\p{k}{n}|^2)$ by \cref{lem:KeyLemma}, as required. 

% SUBSEC: The diagonal quantity is small
\subsection{Estimate for $\sum_{d=1}^{n-1} J_{k,d,d}(n)$}
From \eqref{J-sum}, we have that
\[
\sum_{d=1}^{n-1} J_{k,d,d}(n) =  \sum_{d=1}^{n-1}  |\p{1}{d}|^2  \sum_{ M'\in \p{2k-2}{2n-2d} }   \dsum_{\substack{A', B'\in \p{k-1}{n-d} \\  A' \mid  \,M', \: B' \mid M'}} 1.
\]
Notice the inner triple sum is the same as \eqref{eqn:GCD} with $\ell = n-d$ and $t=k-1$. Thus, $\sum_{d=1}^{n-1} J_{k,d,d}(n)$ is at most
\[
2 \sum_{d=1}^{n-1} |\p{1}{d}|^2  |\p{k-1}{n-d}|^2 + \sum_{d=1}^{n-1}    \sum_{j=1}^{k-2} \sum_{g=1}^{n-d-1} |\p{1}{d}|^2 |\p{j}{g}|^2 |\p{k-j-1}{n-d-g}|^2.
\]
Since $k=o(\log n)$ as $n\to\infty$, all of these sums are $o(|\p{k}{n}|^2)$ by \cref{lem:KeyLemma}. This completes the proof of \eqref{eqn:ThreeSums} and the proof of \cref{thm:Main}. \hfill \qed
\\
% SEC: Preliminaries
\section{Proof of \texorpdfstring{\cref{lem:KeyLemma}}{}} 
\label{sec:TechnicalProof}
All that remains is to prove \cref{lem:KeyLemma}. To do so, we shall first require an estimate for the size of $\p{k}{n}$ that is uniform for all integers $k$ and $n$. G\'{o}mez-Colunga et al. \cite{gomez2020size} have recently established such a result. 

\begin{proposition}[G\'{o}mez-Colunga--Kavaler--McNew--Zhu] \label{prop-HardyRamanujan}
	Uniformly for all $k, n \geq 1$, 
	\[
	|\p{k}{n}| \leq \frac{q^n}{n} \frac{(\log n + 2 - \log 2)^{k-1}}{(k-1)!}.
	\]
\end{proposition}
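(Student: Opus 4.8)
The plan is to strip the problem down to an elementary inequality for a multiple sum over degrees, and then prove that inequality by induction on $k$.

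First I would discard all arithmetic structure beyond degrees. A squarefree $F$ with $\omega(F)=k$ and $\deg F = n$ is an unordered set of $k$ distinct monic irreducibles whose degrees sum to $n$, and the ordered $k$-tuples obtained from such sets form a subset of the set of all ordered $k$-tuples of monic irreducibles with degrees summing to $n$, whose cardinality is $\sum_{m_1+\cdots+m_k=n}\pi_q(m_1)\cdots\pi_q(m_k)$, where $\pi_q(m):=|\p{1}{m}|$. Hence $k!\,|\p{k}{n}| \le \sum_{m_1+\cdots+m_k=n}\pi_q(m_1)\cdots\pi_q(m_k)$, and using the elementary bound $\pi_q(m) \le q^m/m$ (which follows from $m\,\pi_q(m) = \sum_{d\mid m}\mu(d)q^{m/d} \le q^m$) this gives
\[
|\p{k}{n}| \le \frac{q^n}{k!}\,T_k(n), \qquad T_k(n) := \sum_{\substack{m_1+\cdots+m_k=n\\ m_i\ge 1}} \frac{1}{m_1\cdots m_k}.
\]
Since $k/k! = 1/(k-1)!$, it therefore suffices to prove the clean estimate
\[
T_k(n) \le \frac{k}{n}\bigl(\log n + 2 - \log 2\bigr)^{k-1} \qquad \text{for all integers } k,n\ge 1.
\]

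To prove this I would induct on $k$. The base case $k=1$ is the identity $T_1(n)=1/n$. For the inductive step I would use the \emph{exact} recursion $T_k(n) = \sum_{m=1}^{n-1}\frac{1}{m}\,T_{k-1}(n-m)$ (extending $T_{k-1}$ by $0$ below $k-1$, which is harmless as the claimed bound is nonnegative), substitute the inductive hypothesis, and then exploit the partial-fraction identity $\frac{1}{m(n-m)} = \frac{1}{n}\bigl(\frac1m + \frac1{n-m}\bigr)$ to split the resulting double sum into two one-dimensional sums. Writing $c = 2 - \log 2$, this yields
\[
T_k(n) \le \frac{k-1}{n}\Bigl( \underbrace{\sum_{m=1}^{n-1}\frac{(\log(n-m)+c)^{k-2}}{m}}_{=:S_1} \; + \underbrace{\sum_{j=1}^{n-1}\frac{(\log j + c)^{k-2}}{j}}_{=:S_2} \Bigr),
\]
and the desired bound follows as soon as $S_1 \le (\log n + c)^{k-1}$ and $S_2 \le \frac{(\log n+c)^{k-1}}{k-1}$, since then $(k-1)(S_1+S_2) \le k(\log n+c)^{k-1}$. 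The estimate for $S_1$ is routine: $\log(n-m) \le \log n$, and $\sum_{m\le n-1}\frac1m = H_{n-1} \le 1 + \log(n-1) \le c + \log n$ — and this last step is precisely where the inequality $c \ge 1$, i.e.\ $2 - \log 2 \ge 1$, is used.

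The crux, and the step I expect to be the main obstacle, is the estimate $S_2 = \sum_{j=1}^{n-1}\frac{(\log j + c)^{k-2}}{j} \le \frac{(\log n + c)^{k-1}}{k-1}$. Morally this is the Riemann-sum comparison $\sum_{j<n} g(j) \le \int_1^n g(x)\,dx = \frac{(\log n + c)^{k-1} - c^{k-1}}{k-1}$ with $g(x) = \frac{(\log x + c)^{k-2}}{x}$, but $g$ is not monotone — it increases for $1\le x\le e^{k-2-c}$ and decreases thereafter — so the naive comparison is not immediate and one must argue according to whether $n$ lies below or above (a neighbourhood of) $e^{k-2-c}$, equivalently whether $k$ is large or small compared with $\log n$. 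When $k$ is large relative to $\log n$ the summand is increasing on all of $[1,n]$ and $\sum_{j<n} g(j) \le \int_1^n g < \frac{(\log n+c)^{k-1}}{k-1}$ at once; when $k$ is small one has to absorb the surplus produced near the maximum of $g$, and this is exactly where the value $c = 2 - \log 2$ is calibrated so that the induction closes (some additional slack is available, if needed, from the gap in $H_{n-1} \le 1 + \log(n-1)$ left over in the bound for $S_1$). Having established the two displayed estimates, one assembles them as above and unwinds the reduction to obtain the proposition; I would also verify the small cases $k \in \{1,2,3\}$ by hand, as these are where the inequality for $S_2$ is tightest.
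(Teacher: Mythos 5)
The paper does not actually prove this proposition; it is quoted verbatim from G\'omez-Colunga--Kavaler--McNew--Zhu \cite{gomez2020size} and used as a black box, so there is no ``paper's own proof'' to compare against. Evaluating your argument on its own terms: the reduction $k!\,|\p{k}{n}| \le \sum_{m_1+\cdots+m_k=n}\pi_q(m_1)\cdots\pi_q(m_k)$, the elementary bound $\pi_q(m)\le q^m/m$, the reformulation as $T_k(n)\le \frac{k}{n}(\log n + c)^{k-1}$ with $c=2-\log 2$, the recursion, and the partial-fraction split are all fine. But the step you yourself single out as the crux,
\[
S_2 \;=\; \sum_{j=1}^{n-1}\frac{(\log j + c)^{k-2}}{j} \;\le\; \frac{(\log n + c)^{k-1}}{k-1},
\]
is in fact \emph{false} for every fixed $k\ge 4$ once $n$ is large, so no amount of ``calibration'' of $c$ at that step will close the induction in the form you propose. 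By Abel summation one has
\[
S_2 \;=\; \frac{(\log n + c)^{k-1}-c^{k-1}}{k-1} \;+\; D_k \;+\; o(1), \qquad
D_k \;=\; \sum_{l=0}^{k-2}\binom{k-2}{l}c^{\,k-2-l}\gamma_l,
\]
where $\gamma_0=\gamma\approx 0.577,\ \gamma_1\approx -0.073,\ \gamma_2\approx -0.010,\ldots$ are the Stieltjes constants. The dominant term is $c^{k-2}\gamma_0$, so the inequality $S_2\le \frac{(\log n+c)^{k-1}}{k-1}$ asymptotically requires $D_k\le c^{k-1}/(k-1)$, i.e.\ roughly $\gamma_0 \le c/(k-1)$, which fails for $k\ge 4$; concretely $D_4\approx 0.79 > c^3/3\approx 0.74$.

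So the proposed two-way split $S_1\le(\log n+c)^{k-1}$, $S_2\le\frac{(\log n+c)^{k-1}}{k-1}$ cannot be made to work as stated. You gesture at the slack hiding in $S_1$ as the rescue, and there is some: the crude estimate $S_1\le H_{n-1}(\log n + c)^{k-2}$ leaves spare room of order $(c-\gamma_0)(\log n+c)^{k-2}$ for large $n$, which comfortably dwarfs the bounded overshoot in $S_2$. But turning that into a proof means rebalancing the split and treating small $n$ (relative to $k$) by a separate unimodality argument, and that is genuine work you have not carried out -- and which may or may not route through the same constant $2-\log 2$. As it stands, this is a real gap, not a cosmetic one: the inductive step is anchored on an inequality that is not true.
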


This corresponds to a classical result of Hardy and Ramanujan \cite{ramanujan1917normal} for the integers: there exists a constant $B>0$ such that, for all $k\geq 1$ and $x\geq 2$, we have
$$
\pi_k(x) \ll \frac{x}{\log x}\frac{(\log\log x+B)^{k-1}}{(k-1)!} ,
$$
where $\pi_k(x)$ is the number of squarefree integers up to $x$ with $k$ prime factors. 

Sathe  \cite{sathe1953problem,sathe1954problem} and Selberg \cite{selberg1954note} famously derived an asymptotic estimate for $\pi_k(x)$ when $k =o(\log \log x)$. We shall also need an asymptotic estimate for $|\p{k}{n}|$ that is valid when $k = o(\log n)$. 
Although the estimate $|\p{k}{n}| \sim \frac{q^n (\log n)^{k-1}}{n (k-1)!}$ (see, e.g.,  \cite{warlimont}) suffices for our purposes, we state here the strongest and most recent result on an estimate for $|\p{k}{n}|$, which is a so-called Sathe--Selberg formula for function fields established by Afshar and Porritt \cite{afshar_function_2019}.

\begin{proposition}[Afshar--Porritt] \label{prop-SatheSelberg}
	Let $A>1$. Uniformly for all $n\geq2$ and $1\leq k\leq A\log n$,
	\[
	  |\p{k}{n}|= \frac{q^n(\log n)^{k-1}}{n(k-1)!}\left(G\left(\frac{k-1}{\log n}\right)+O_A\left(\frac{k}{(\log n)^2}\right)\right),
	  \]
	  where
	  \[
	G(z)=\frac{1}{\Gamma(1+z)}\prod_{\substack{P\in\mathcal{M} \\  P \text{ irreducible} }}\left(1+\frac{z}{q^{\deg P}}\right)\left(1-\frac{1}{q^{\deg P}}\right)^z,
	\]
	and $\Gamma(\cdot)$ is the Gamma function defined as
	$
	\Gamma(z) = \int_{0}^{\infty} x^{z-1}e^{-x} dx.
	$
\end{proposition}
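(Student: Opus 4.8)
\textbf{Plan of proof for \cref{lem:KeyLemma}.}

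The plan is to prove the iterated-convolution bound \eqref{iteratedsum} by induction on $r$, using \cref{prop-HardyRamanujan} as the base case engine and keeping track of the shape $\frac{q^{2n}(\log n + 2 - \log 2)^{2k-2r}}{n^2 (k-r)!^2}$ throughout. It will be convenient to abbreviate $L = L(n) := \log n + 2 - \log 2$ and to write $\beta(k,n) := \frac{q^n}{n}\cdot\frac{L^{k-1}}{(k-1)!}$, so that \cref{prop-HardyRamanujan} reads $|\p{k}{n}| \le \beta(k,n)$ for all $k,n \ge 1$ and the target right-hand side is exactly $\beta(k,n)^2$ with $L^{k-1}/(k-1)!$ replaced by $L^{k-r}/(k-r)!$, i.e.\ $\ll_r q^{2n} n^{-2} \bigl(L^{k-r}/(k-r)!\bigr)^2$. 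For $r=1$ the claim is literally \cref{prop-HardyRamanujan} squared (note $L(n_1) \le L(n)$ since the sum has a single term $n_1 = n$, $k_1 = k$), so the base case is immediate.

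For the inductive step, I would split off the last pair $(k_r, n_r)$: the left side of \eqref{iteratedsum} equals
\[
\sum_{\substack{1 \le k_r \le k-r+1 \\ 1 \le n_r \le n-r+1}} |\p{k_r}{n_r}|^2 \cdot \Bigl(\ \mathop{\dsum\cdots\dsum}_{\substack{k_1 + \cdots + k_{r-1} = k - k_r \\ n_1 + \cdots + n_{r-1} = n - n_r}} |\p{k_1}{n_1}|^2 \cdots |\p{k_{r-1}}{n_{r-1}}|^2\ \Bigr),
\]
apply the induction hypothesis to the inner $(r-1)$-fold sum (with $k$ replaced by $k - k_r \ge r-1$, which holds since $k_r \le k-r+1$, and $n$ by $n - n_r$), and then bound $|\p{k_r}{n_r}|^2 \le \beta(k_r, n_r)^2$ by \cref{prop-HardyRamanujan}. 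This reduces the problem to estimating a genuine two-variable double sum of the form
\[
\sum_{k_r, n_r} \frac{q^{2 n_r}}{n_r^2} \frac{L(n_r)^{2k_r - 2}}{(k_r-1)!^2} \cdot \frac{q^{2(n-n_r)}}{(n - n_r)^2} \frac{L(n - n_r)^{2(k - k_r) - 2(r-1)}}{(k - k_r - (r-1))!^2}.
\]
The $q^{2n}$ factors out cleanly. For the remaining sum I would first handle the $k_r$-sum: using $L(m) \le L(n)$ for all $m \le n$ to replace every $L$ by $L = L(n)$, the $k_r$-dependence is $\sum_{k_r} \binom{2k - 2r}{2k_r - 2}^{-1}$-type — more precisely one recognizes a Vandermonde/Cauchy-product identity $\sum_{a + b = m} \frac{1}{a!^2 b!^2} = \frac{1}{m!^2}\binom{2m}{m} \le \frac{4^m}{m!^2}$ (applied with $a = k_r - 1$, $b = k - k_r - r + 1$, $m = k - r$), which produces the desired $L^{2k-2r}/(k-r)!^2$ up to an absolute constant $4^{k-r}$. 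The final, and I expect main, obstacle is that this $4^{k-r}$ is \emph{not} acceptable: the statement demands an implied constant depending only on $r$, not growing with $k$. Resolving this requires not throwing away the gap between $L(n_r)$, $L(n-n_r)$ and $L(n)$ too early; instead one must exploit that $L$ is essentially $\log n$ while the number of terms is polynomial in $n$, so the convolution is dominated (as in the classical Hardy–Ramanujan/Selberg–Delange argument over $\mathbb{Z}$) by the ``balanced'' or ``one-sided'' terms where one of $n_r, n - n_r$ is bounded. Concretely, I would dyadically decompose the $n_r$-range and show that the sum is controlled by $n_r = O(1)$ (and symmetrically $n - n_r = O(1)$), where $L(n_r) = O(1)$ kills the spurious exponential growth, leaving a convergent $\sum_{n_r \ge 1} q^{2n_r} n_r^{-2} \cdot(\text{const})^{2k_r}/(k_r-1)!^2 \ll_r 1$ after summing $k_r$; the constraint $k \le \frac13 \log n$ is exactly what makes $L^{2k}$ negligible against the $q^{2n_r}$ savings in the non-balanced ranges. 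Once \eqref{iteratedsum} is established, the ``in particular'' clause follows by comparing with the Sathé–Selberg lower bound $|\p{k}{n}| \gg q^n (\log n)^{k-1}/(n (k-1)!)$ from \cref{prop-SatheSelberg} (valid since $k = o(\log n)$, and $G$ is bounded away from $0$ on compact subsets of $[0,\infty)$): the ratio of \eqref{iteratedsum}'s bound to $|\p{k}{n}|^2$ is $\ll_r \bigl((k-1)!/(k-r)!\bigr)^2 (L/\log n)^{2k} L^{-2(r-1)} \ll_r k^{2(r-1)} (\log n)^{-2(r-1)}(1 + O(1/\log n))^{2k} = o(1)$ when $r \ge 2$ and $k = o(\log n)$.

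One bookkeeping point worth isolating: to run the induction I need $k - k_r \ge r - 1$ so that the hypothesis applies, but the outer sum also ranges over $k_r$ as small as $1$, which forces $k \ge r$; this is consistent with the hypothesis $r \le k$. I would also double-check the degenerate cases $n_r = n - r + 1$ (forcing all other $n_i = 1$) and $k_r$ extremal, where several factorials become $0! = 1$ and the bounds are trivially in range. None of this is delicate; the entire weight of the argument sits in the uniform-in-$k$ control of the convolution described above, for which the function-field advantage — an exact formula $|\mathcal{M}_n| = q^n$ with no error term, and $|\p{k}{n}|$ genuinely supported on $n \ge k$ with clean Hardy–Ramanujan bounds — is what makes the estimates cleaner than over $\mathbb{Z}$, as the paper's introduction promises.
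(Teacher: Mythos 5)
Your proposal does not address the statement under review. The statement is \cref{prop-SatheSelberg}, the Afshar--Porritt asymptotic (a Sathe--Selberg formula for $|\p{k}{n}|$ over $\mathbb{F}_q[t]$), whereas what you have written is a plan for proving \cref{lem:KeyLemma}, a different result. These are not interchangeable: \cref{prop-SatheSelberg} is an analytic statement about the precise asymptotic size of $\p{k}{n}$, with a main term involving the Euler-product factor $G(z)$ and a quantified error term uniform in the range $k \leq A \log n$. Establishing it requires a genuinely different toolkit --- the generating function $\sum_{F} z^{\omega(F)} u^{\deg F}$ over squarefree monics, its factorization against $\zeta_q(u)^z$, and a Selberg--Delange-type extraction of the coefficient of $u^n z^k$ adapted to the function-field zeta function --- none of which appears in your plan. (The paper itself offers no proof of this proposition; it is quoted from \cite{afshar_function_2019}, just as \cref{prop-HardyRamanujan} is quoted from \cite{gomez2020size}.)

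There is also a circularity problem if your text were read as an attempted proof of \cref{prop-SatheSelberg}: your final paragraph derives the ``in particular'' clause of \cref{lem:KeyLemma} by invoking the lower bound $|\p{k}{n}| \gg q^n(\log n)^{k-1}/(n(k-1)!)$ ``from \cref{prop-SatheSelberg},'' i.e.\ you use the very statement you were asked to prove. As a secondary remark, even judged as a plan for \cref{lem:KeyLemma} your route diverges from the paper's: you propose replacing every $L(n_i)$ by $L(n)$, hitting a Vandermonde bound with an unacceptable $4^{k-r}$ loss, and then repairing it by a dyadic decomposition into balanced ranges; the paper instead pigeonholes $n_r \leq \lfloor n/r\rfloor$ (so that $(n-n_r)^{-2} \ll n^{-2}$ --- a step your displayed double sum over $n_r \leq n-r+1$ omits), bounds $\sum_{n_r}(\log n_r + c)^{2k_r-2}n_r^{-2} \ll (2k_r-2)!$, and kills the resulting exponential growth in $k_r$ by a ratio test using $k \leq \tfrac13\log n$. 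But the essential defect is the mismatch of statements: nothing in your proposal establishes the asymptotic formula of \cref{prop-SatheSelberg}.
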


\cref{prop-HardyRamanujan,prop-SatheSelberg} imply our key technical lemma.

\begin{proof}[Proof of \cref{lem:KeyLemma}]
The second estimate follows from  \eqref{iteratedsum} since
\[
  \frac{q^{2n} (\log n + 2-\log 2)^{2k-2r}}{n^2 (k-r)!^2 }   \leq  \frac{q^{2n} (\log n)^{2k-2}}{n^2 (k-1)!^2 } \cdot \frac{ k^{2r-2}}{(\log n)^{2r-2}} \Big(1 + \frac{2-\log 2}{\log n} \Big)^{2k-2r},
\]
and \cref{prop-SatheSelberg} implies that if $k = o(\log n)$ as $n \to \infty$, then $|\p{k}{n}| \sim \frac{q^n (\log n)^{k-1}}{n (k-1)!}$.

To prove \eqref{iteratedsum}, we proceed by induction on $r$. For $r=1$, the claim follows immediately from \cref{prop-HardyRamanujan}. For $r \geq 2$, if $n_1+\cdots+n_r = n$, then at least one of  $n_1,\dots,n_r$  is at most $\lfloor n/r \rfloor $. By symmetry, we may assume it is $n_r$ so the left side of \eqref{iteratedsum} is at most
\[
\ll_r \sum_{k_r=1}^{k-r+1} \sum_{n_r=1}^{\lfloor n/r\rfloor} |\p{k_r}{n_r}|^2 \Big( \mathop{\dsum \cdots \dsum}_{\substack{k_1, n_1, \dots,k_{r-1}, n_{r-1} \geq 1 \\ k_1+\cdots+ k_{r-1} = k-k_r \\ n_1+\cdots+ n_{r-1} = n-n_r} } |\p{k_1}{n_1}|^2 \cdots |\p{k_{r-1}}{n_{r-1}}|^2  \Big).
\]
Notice that $n-n_r \geq n/2$ as $r \geq 2$. Since $k \leq \frac{1}{3}\log n$ by assumption, this implies that $k-k_r \leq k-1 \leq \frac{1}{3}\log(n/2) \leq \frac{1}{3} \log(n-n_r)$. Thus, by the inductive hypothesis, the above is 
% \[
%   \ll_r q^{2n}\sum_{k_r=1}^{k-r+1}\sum_{n_r=1}^{\lfloor n/r\rfloor} |\p{k_r}{n_r}|^2 \frac{(\log(n-n_r) + c)^{2k-2k_r-2r+2}}{(n-n_r)^2 (k-k_r-r+1)!^2},
% \]
\[
   \ll_r \sum_{k_r=1}^{k-r+1}\sum_{n_r=1}^{\lfloor n/r\rfloor} |\p{k_r}{n_r}|^2 \frac{ q^{2(n-n_r)}(\log(n-n_r) + c)^{2k-2k_r-2r+2}}{(n-n_r)^2 (k-k_r-r+1)!^2},
\]
where for brevity we have set $c=2-\log 2$. Applying \cref{prop-HardyRamanujan}, we see that this is at most
% Relaxing the constraint over $n_r$ and applying \cref{prop-HardyRamanujan}, we see that this is at most
% \begin{equation}
% \small
% \sum_{k_r=1}^{k-r+1}\frac{q^{2n}}{(k_r-1)!^2(k-k_r-r+1)!^2} \sum_{n_r=1}^{n-1}\frac{(\log n_r +c)^{2k_r-2}(\log(n-n_r)+c)^{2k-2k_r-2r+2}}{n_r^2(n-n_r)^2}.
% \nonumber
% \end{equation}

\begin{align}
&\sum_{k_r=1}^{k-r+1}\frac{q^{2n}}{(k_r-1)!^2(k-k_r-r+1)!^2} \sum_{n_r=1}^{\lfloor n/r \rfloor}\frac{(\log n_r +c)^{2k_r-2}(\log(n-n_r)+c)^{2k-2k_r-2r+2}}{n_r^2(n-n_r)^2} \nonumber \\
%\label{symmetricsum}
  &\ll_r \frac{q^{2n}}{n^2} \sum_{k_r=1}^{k-r+1}\frac{(\log n +c)^{2k-2k_r-2r+2}}{(k_r-1)!^2(k-k_r-r+1)!^2}  \sum_{n_r=1}^{\lfloor n/r \rfloor}\frac{(\log n_r + c)^{2k_r-2}}{n_r^2}. \label{symmetricsum}
\end{align}
% Splitting the inner sum at $\lceil n/2 \rceil$, this is
% \begin{multline}
% \label{symmetricsum}
%   \ll_r \frac{q^{2n}}{n^2} \sum_{k_r=1}^{k-r+1}\frac{(\log n +c)^{2k-2k_r-2r+2}}{(k_r-1)!^2(k-k_r-r+1)!^2}  \sum_{n_r=1}^{\lceil n/2 \rceil}\frac{(\log n_r + c)^{2k_r-2}}{n_r^2} \\
% + \frac{q^{2n}}{n^2} \sum_{k_r=1}^{k-r+1}\frac{(\log n +c)^{2k_r-2}}{(k_r-1)!^2(k-k_r-r+1)!^2}  \sum_{n_r=\lceil n/2 \rceil}^{n-1}\frac{(\log (n-n_r)+c)^{2k-2k_r-2r+2}}{(n-n_r)^2}.
% \end{multline}
Note that for any integer $m \geq 0$,
% $$
% \sum_{j=1}^{\infty} \frac{(\log j)^m}{j^2} \ll \int_{1}^{\infty} \frac{(\log t)^m}{t^2}\; dt = \int_{0}^{\infty} t^{m}e^{-t}\; dt=m!.
% $$
$$
\sum_{j=1}^{\infty} \frac{(\log j + c)^m}{j^2} \ll \int_{1}^{\infty} \frac{(\log t +c)^m}{t^2}\; dt = \int_{c}^{\infty} t^{m}e^{c-t}\; dt \ll \int_{0}^{\infty} t^{m}e^{-t}\; dt= m!.
$$
% Using this estimate on the inner sum over $n_r$, it follows that \eqref{symmetricsum} is
Using this estimate on the inner sum over $n_r$, it follows that \eqref{symmetricsum} is
% \begin{equation*}
% \begin{aligned}
% & \small \ll_r \frac{q^{2n}}{n^2} \sum_{k_r=1}^{k-r+1}\frac{(2k_r-2)!(\log n +c)^{2k-2k_r-2r+2}}{(k_r-1)!^2(k-k_r-r+1)!^2} + \frac{q^{2n}}{n^2} \sum_{k_r=1}^{k-r+1}\frac{(2k-2k_r-2r+2)!(\log n +c)^{2k_r-2}}{(k_r-1)!^2(k-k_r-r+1)!^2} \\
% & \small \ll_r \frac{q^{2n}}{n^2}\sum_{k_r=1}^{k-r+1}\frac{(2k_r-2)!(\log n +c)^{2k-2k_r-2r+2}}{(k_r-1)!^2(k-k_r-r+1)!^2},
% \end{aligned}
% \end{equation*}
% where we have used that the two sums over $k_r$ are equal upon reindexing. 
\begin{equation*}
    \ll_r \frac{q^{2n}}{n^2}\sum_{k_r=1}^{k-r+1}\frac{(2k_r-2)!(\log n +c)^{2k-2k_r-2r+2}}{(k_r-1)!^2(k-k_r-r+1)!^2}.
\end{equation*}
For the final sum over $k_r$, notice that the ratio of consecutive summands is equal to
$$
\frac{2k_r(2k_r-1)}{k_r^2}\frac{(k-k_r-r+1)^2}{(\log n+c)^{2}}
\leq \frac{4k^2}{(\log n)^2} \leq \frac{4}{9}, 
$$ 
since  $k \leq \frac{1}{3} \log n$ by assumption. Hence, the final sum over $k_r$ is dominated by its value at the endpoint $k_r=1$, yielding the desired estimate. This establishes  \cref{lem:KeyLemma}.
\end{proof}

\bibliographystyle{acm}
\bibliography{reference.bib}
\parindent0pt
\end{document}